\newtheorem{thm}{Theorem}
\newtheorem{lem}[thm]{Lemma}
\newtheorem{prop}[thm]{Proposition}
\theoremstyle{definition}
\numberwithin{thm}{section}
\numberwithin{equation}{section}
\newcommand{\C}{\mathbb{C}}
\newcommand{\F}{\mathbb{F}}
\newcommand{\Q}{\mathbb{Q}}
\newcommand{\Kb}{\overline{K}}
\newcommand{\kb}{\overline{k}}
\newcommand{\Kab}{K^{\text{ab}}}
\newcommand{\Gab}{\mathrm{G}^{\text{ab}}}
\newcommand{\varrhoab}{\varrho^{\text{ab}}}
\newcommand{\varphiab}{\varphi^{\text{ab}}}
\newcommand{\chiab}{\chi^{\text{ab}}}
\newcommand{\A}{\mathbb{A}}
\newcommand{\R}{\mathbb{R}}
\newcommand{\Z}{\mathbb{Z}}
\newcommand{\betab}{\overline{\beta}}
\newcommand{\pib}{\overline{\pi}}
\newcommand{\gammab}{\overline{\gamma}}
\newcommand{\mfa}{\mathfrak{a}}
\newcommand{\mfp}{\mathfrak{p}}
\newcommand{\mfq}{\mathfrak{q}}
\newcommand{\mfP}{\mathfrak{P}}
\newcommand{\mfQ}{\mathfrak{Q}}
\newcommand{\Dtil}{\widetilde{D}}
\newcommand{\util}{\widetilde{u}}
\newcommand{\wtil}{\widetilde{w}}
\newcommand{\betatil}{\widetilde{\beta}}
\newcommand{\cA}{\mathcal{A}}
\newcommand{\cC}{\mathcal{C}}
\newcommand{\cP}{\mathcal{P}}
\newcommand{\cO}{\mathcal{O}}
\newcommand{\cN}{\mathcal{N}}
\newcommand{\cS}{\mathcal{S}}
\newcommand{\cT}{\mathcal{T}}
\newcommand{\Ram}{\mathbf{Ram}}
\newcommand{\Gal}{\mathrm{Gal}}
\newcommand{\G}{\mathrm{G}}
\newcommand{\M}{\mathrm{M}}
\newcommand{\N}{\mathrm{N}}
\newcommand{\End}{\mathrm{End}}
\newcommand{\Aut}{\mathrm{Aut}}
\newcommand{\Norm}{\mathrm{Norm}}
\newcommand{\Nrd}{\mathrm{Nrd}}
\newcommand{\tr}{\mathrm{tr}}
\newcommand{\Tr}{\mathrm{Tr}}
\newcommand{\cf}{cf.\ }
\newcommand{\resp}{resp.\ }
\newcommand{\bq}{\beta_{\mfq}}
\newcommand{\one}{\pmb{1}}
\begin{document}

\title{Points on Shimura curves rational over imaginary quadratic fields
in the non-split case}
\author{Keisuke Arai\\
Department of Mathematics,
School of Science and Technology for Future Life,\\
Tokyo Denki University\\
5 Senju Asahi-cho, Adachi-ku, Tokyo 120-8551 Japan\\
E-mail: araik@mail.dendai.ac.jp}


\date{}


\maketitle



\begin{abstract}
For an imaginary quadratic field $k$ of class number $>1$,
we prove that there are only finitely many isomorphism classes of
rational indefinite quaternion division algebras $B$ such that
the associated Shimura curve $M^B$ has $k$-rational points. 
In other words, the main result asserts that
there is a finite set $P(k)$ of prime numbers depending on $k$ such that:
if there is a prime divisor of the discriminant of $B$ which is not in $P(k)$,
then $M^B$ has no $k$-rational points.
Moreover, we can take $P(k)$ to satisfy the following:
There is an effectively computable constant $C(k)$ depending on $k$ such that
$p\in P(k)$ implies $p<C(k)$
with at most one possible exception.

The case where $k$ splits $B$ was done by Jordan.
In the non-split case, the proof is done by studying a canonical isogeny
character and its composition with the transfer map.
\end{abstract}

\noindent
{\bf keywords:}
rational points, Shimura curves, QM-abelian surfaces

\noindent
2020 \textit{Mathematics Subject Classification.}
Primary 11G18, 14G05; Secondary 11G10, 11G15.


\section{Introduction}
\label{intro}

For an integer $N\geq 1$,
let $X_0(N)$ be the smooth compactification of the coarse moduli scheme over $\Q$
parameterizing the isomorphism classes of $(E,C)$, where 
\begin{itemize}
\item
$E$ is an elliptic curve, and 
\item
$C$ is a cyclic subgroup of $E$ of order $N$.
\end{itemize}
Then $X_0(N)$ is a proper smooth curve over $\Q$, which is called a 
\textit{modular curve}.
If $p$ is a prime number $>163$, then the set $X_0(p)(\Q)$ of $\Q$-rational points 
consists of only two cusps
(see \cite[Theorem 7.1]{Ma}).
This theorem was extended to a result on rational points over a quadratic field
which is not an imaginary quadratic field of class number one
(see \cite[Theorem B]{Mo}).
These results can be interpreted as follows.

\begin{description}
\item
\textit{
If the level of the modular curve grows, then the set of rational points on it 
becomes small.}
\end{description}
A similar result is found concerning the quotient 
$X_0^+(p^r):=X_0(p^r)/w_{p^r}$ ($p\geq 11$, $p\ne 13$ and $r\geq 2$; or $p=13$ and $r=2$)
by the Atkin-Lehner involution,
in which case a $\Q$-rational point is either a CM point or a cusp
(see \cite[Theorem 1.1]{BDMTV}, \cite[Theorem 1.2]{BP}, \cite[Theorem 1.1]{BPR}).

Let $B$ be an indefinite quaternion division algebra over $\Q$, and let $d(B)$ be its
discriminant.
Then $d(B)>1$, and it is the product of an even number of distinct prime numbers.
Fix a maximal order $\cO$ of $B$.
Let $M^B$ be the coarse moduli scheme over $\Q$
parameterizing the isomorphism classes of 
$(A,i)$, 
where 
\begin{itemize}
\item
$A$ is a two-dimensional abelian variety, and 
\item
$i:\cO\hookrightarrow\End(A)$
is an embedding of $\cO$ into the endomorphism ring of $A$.
\end{itemize}
Such a pair $(A,i)$ is called a \textit{QM-abelian surface} by $\cO$.
Then $M^B$ is a proper smooth curve over $\Q$, which is called the 
\textit{Shimura curve} associated to $B$.
Note that the $\Q$-isomorphism class of $M^B$ depends only on $d(B)$,
and does not depend on the choice of $\cO$.
We regard $M^B$ as an analog of a modular curve,
but $M^B$ has no cusps.
By \cite[Theorem 0]{Sh}, we have
$$M^B(\Q)=M^B(\R)=\emptyset.$$
If $k$ is an imaginary quadratic field of class number one
and if $B\otimes_{\Q}k\cong\M_2(k)$,
then $M^B(k)$ is never empty (see \cite[Proposition 6.5]{J}).

In the following, let 
\begin{itemize}
\item
$k$: an imaginary quadratic field of class number $h_k>1$.
\end{itemize}
We study the behavior of $M^B(k)$ when $d(B)$ grows.
Here, we regard $d(B)$ as the level of $M^B$.
The main result of this article is:
\begin{thm}
\label{mainthm}
There is a finite set $D(k)$ of positive integers depending on $k$ such that:
if $d(B)\not\in D(k)$, then $M^B(k)=\emptyset$.

\end{thm}

In practice, we prove:
\begin{thm}
\label{mainthm'}
\begin{enumerate}[\upshape (1)]
\item
There is a finite set $P(k)$ of prime numbers depending on $k$ such that:
if there is a prime divisor of $d(B)$ which is not in $P(k)$,
then $M^B(k)=\emptyset$.
\item
In the situation of {\rm (1)}, we can take $P(k)$ to satisfy the following:
There is an effectively computable constant $C(k)$ depending on $k$ such that
$$\text{$p\in P(k)$ implies $p<C(k)$}$$ with at most one possible exception.
\end{enumerate}
\end{thm}

In the situation of Theorem \ref{mainthm'},
let $\Dtil(k)$ be the set of the products of an even number ($\geq 2$)
of distinct prime numbers
in $P(k)$.
Then $\Dtil(k)$ is finite, and Theorem \ref{mainthm} holds with $D(k)=\Dtil(k)$.
Note that Theorems \ref{mainthm} and \ref{mainthm'}(1) are equivalent.
%

Jordan gave a similar result in \cite[Theorem 6.6]{J},
restricting the case where 
$B\otimes_{\Q}k\cong\M_2(k)$.
Note that no prime divisor of $d(B)$ splits in $k$ 
if $B\otimes_{\Q}k\cong\M_2(k)$.
Note also that a point of $M^B(k)$ is represented by a QM-abelian surface by $\cO$
over $k$ if and only if $B\otimes_{\Q}k\cong\M_2(k)$
(see \cite[Theorem 1.1]{J}).
These facts make the case 
$B\otimes_{\Q}k\cong\M_2(k)$
less difficult.

In \S \ref{excep_set}, we give a definition of the exceptional set $P(k)$. 
The proof of Theorem \ref{mainthm'} is given in 
\S \ref{sec:inert},\ref{sec:split},\ref{sec:ksplitsB}.
\S \ref{sec:isogeny} is devoted to its preparation.
The idea of the proof, which is a modification of that in \cite{J}, is as follows.
When $B\otimes_{\Q}k\not\cong\M_2(k)$, a point $x$ of $M^B(k)$ is not 
represented by a QM-abelian surface by $\cO$ over $k$.
We choose a quadratic extension $K$ of $k$ such that 
$B\otimes_{\Q}K\cong\M_2(K)$.
Then $x$ is represented by a QM-abelian surface by $\cO$ over $K$,
and we have a canonical isogeny character $\varrho$ of the absolute Galois 
group of $K$.
With the aid of the transfer map, we obtain a character $\varphi$ of the absolute 
Galois group of $k$.
Then $\varphi^{12}$ is independent of the choice of $K$ (up to $p$-th power),
and we classify it.
In each case of the classification, we carefully replace $K$ with a suitable one
if necessary, and compute $\varrho$.

%
Rational points on $M^B$ are studied in 
the context of the Hasse principle
in \cite{AMB1}, \cite{AMB2}, \cite{C}, \cite{RdV}, \cite{Sk}, \cite{SY}.
Especially, the result of \cite{AMB1} is applied to produce an explicit
infinite family of counterexamples to the Hasse principle on $M^B$ over number fields
$L$ when we fix $B$ and vary $L$ (see \cite[Proposition 2.6]{AMB2}).
The result of this article might be applied to produce such a family when we 
fix $L=k$ and vary $B$.

We also have a series of works \cite{Aeff}, \cite{AIV}, \cite{AII}, \cite{AIII}, \cite{AM}
concerning rational points on Shimura curves of $\Gamma_0(p)$-type
when the level $p$ grows.

\subsection*{Notation}

For a number field $K$, fix an embedding $K\hookrightarrow\C$.
We use the following notations, where $\mfQ$ is a prime of $K$.

\begin{itemize}
\item
$|a|$: the absolute value of $a\in\C$,
\item
$\overline{a}$: the complex conjugate of $a\in\C$,
\item
$\cO_K$: the ring of integers of $K$,
\item
$Cl_K$: the ideal class group of $K$,
\item
$\Kb$: the algebraic closure of $K$ inside $\C$,
\item
$\Kab$: the maximal abelian extension of $K$ inside $\Kb$,
\item
$\G_K:=\Gal(\Kb/K)$,
\item
$\Gab_K:=\Gal(\Kab/K)$,
\item
$\kappa(\mfQ)$: the residue field of $\mfQ$,
\item
$l_{\mfQ}$: the characteristic of $\kappa(\mfQ)$,
\item
$\N_{\mfQ}$: the cardinality of $\kappa(\mfQ)$,
\item
$\cO_{K,\mfQ}$: the completion of $\cO_K$ at $\mfQ$,
\item
$\util\in\kappa(\mfQ)$: the reduction of $u\in\cO_{K,\mfQ}$ modulo $\mfQ$.
\end{itemize}

\section{Exceptional sets}
\label{excep_set}

First, we define several sets of integers, which will be used to give a definition of
$P(k)$ in Theorem \ref{mainthm'}.
Let 
\begin{itemize}
\item
$\cS_0$: the set of non-principal primes of $k$ which split in $k/\Q$,
\item
$\cT$: the set of non-empty 
finite subsets of $\cS_0$ which generate $Cl_k$.
\end{itemize}
Then $\cS_0\ne\emptyset$ and $\cT\ne\emptyset$ 
since $h_k>1$.
For each prime $\mfq$ of $k$, 
fix a positive integer $h_{\mfq}$ such that $\mfq^{h_{\mfq}}$ is principal.
For example, we can take $h_{\mfq}$ to be the order of $\mfq$ in $Cl_k$;
we can also take $h_{\mfq}=h_k$.
Fix an element $\beta_{\mfq}\in\cO_k$
satisfying 
$$\mfq^{h_{\mfq}}=\bq\cO_k.$$
Then
$$\N_{\mfq}^{h_{\mfq}}=|\Norm_{k/\Q}(\bq)|=\Norm_{k/\Q}(\bq)=\bq\betab_{\mfq}
\quad\text{and}\quad |\bq|=\N_{\mfq}^{\frac{h_{\mfq}}{2}}.$$
We sometimes write 
$$h=h_{\mfq},\quad \beta=\bq$$ 
if there is no fear of confusion.
Let
\begin{itemize}
\item
$\cC_{\mfq}:=\\
\Set{\gamma^{24h_{\mfq}}+\gammab^{24h_{\mfq}}\in\Z
|\text{$\gamma\in\C$ is a root of
$X^2-mX+\N_{\mfq}=0$
for some $m\in\Z$, $m^2\leq 4\N_{\mfq}$}}$,
\item
$\cC'_{\mfq}:=\\
\Set{\gamma^{12h_{\mfq}}+\gammab^{12h_{\mfq}}\in\Z
|\text{$\gamma\in\C$ is a root of
$X^2-mX+\N_{\mfq}=0$
for some $m\in\Z$, $m^2\leq 4\N_{\mfq}$}}$,
\item
$\cA_{1,\mfq}:=\Set{a-\Tr_{k/\Q}(\bq^{24})\in\Z
|a\in\cC_{\mfq}}$,
\item
$\cA'_{1,\mfq}:=\Set{a-\Tr_{k/\Q}(\bq^{12})\in\Z
|a\in\cC'_{\mfq}}$,
\item
$\cA_{2,\mfq}:=\Set{a-\N_{\mfq}^{8h_{\mfq}}\Tr_{k/\Q}(\bq^8)\in\Z
|a\in\cC_{\mfq}}$,
\item
$\cA'_{2,\mfq}:=\Set{a-\N_{\mfq}^{4h_{\mfq}}\Tr_{k/\Q}(\bq^4)\in\Z
|a\in\cC'_{\mfq}}$,
\item
$\cA_{3,\mfq}:=\Set{a-2\N_{\mfq}^{12h_{\mfq}}\in\Z
|a\in\cC_{\mfq}}$,
\item
$\cA'_{3,\mfq}:=\Set{a-2\N_{\mfq}^{6h_{\mfq}}\in\Z
|a\in\cC'_{\mfq}}$,
\item
$\displaystyle\cA_{3,\cS}:=\bigcup_{\mfq\in\cS}\cA_{3,\mfq}$,
where $\cS\in\cT$,
\item
$\displaystyle\cA'_{3,\cS}:=\bigcup_{\mfq\in\cS}\cA'_{3,\mfq}$,
where $\cS\in\cT$.
%
\end{itemize}
These sets are finite.
Since $h_k>1$, we have $k\ne\Q(\sqrt{-1})$, $\Q(\sqrt{-3})$.
Then $\cO_k^{\times}=\Set{\pm 1}$, and so
$\cA_{1,\mfq}$, $\cA'_{1,\mfq}$, $\cA_{2,\mfq}$ and $\cA'_{2,\mfq}$
are independent of the choice of $\bq$.
%
%
%
%
Let
\begin{itemize}
\item
$\cP(\cA)$: the set of prime divisors of non-zero integers
in $\cA$,
where $\cA$ is a subset of $\Z$,
\item
$\cN$: the set of integers $N\in\Z$ such that
\begin{enumerate}[\upshape (i)]
\item
$N$ is the discriminant of a quadratic field, and
\item
for any prime number $2<l<\frac{|N|}{4}$, if $l$ splits in $k$,
then $l$ does not split in $\Q(\sqrt{N})$.
\end{enumerate}
\end{itemize}
Then $\cN$ is a finite set.
Furthermore, there is an effectively computable constant $C_0(k)$
depending on $k$ such that 
$$\text{$p\in\cN$ implies $p<C_0(k)$}$$
with at most one possible
exception
(see \cite[Theorem A]{Ma}).
Let
\begin{itemize}
\item
$\cN^{prime}$: the set of prime numbers in $\cN$,
\item
$\Ram$: the set of prime numbers which are ramified in $k$,
\item
$\cP_{\leq 23}:=\Set{2,3,5,7,11,13,17,19,23}$,
\item
$\cP_{\leq 7}:=\Set{2,3,5,7}$,
\item
$\displaystyle P_{ns}(k):=\Ram\cup\cP_{\leq 23}
\cup\left(\bigcap_{\mfq\in\cS_0}\cP(\cA_{1,\mfq})\right)
\cup\left(\bigcap_{\mfq\in\cS_0}\cP(\cA_{2,\mfq})\right)\\
\qquad\qquad \cup\left(\bigcap_{\cS\in\cT}
\Bigl(\cP(\cA_{3,\cS})\cup\Set{l_{\mfq}|\mfq\in\cS}\Bigr)\right)
\cup\cN^{prime}$,
\item
$\displaystyle P_{sp}(k):=\Ram\cup\cP_{\leq 7}
\cup\left(\bigcap_{\mfq\in\cS_0}\cP(\cA'_{1,\mfq})\right)
\cup\left(\bigcap_{\mfq\in\cS_0}\cP(\cA'_{2,\mfq})\right)\\
\qquad\qquad \cup\left(\bigcap_{\cS\in\cT}
\cP(\cA'_{3,\cS})\right)
\cup\cN^{prime}$.
\end{itemize}
These sets are finite.

Suppose $M^B(k)\ne\emptyset$,
and let $p$ be a prime divisor of $d(B)$.
%
%
%
%
%
In \S \ref{sec:inert},\ref{sec:split}, we will prove
\begin{equation}
\label{P_A(k)}
\text{$p\in P_{ns}(k)$\quad when $B\otimes_{\Q}k\not\cong\M_2(k)$.}
\end{equation}
In \S \ref{sec:ksplitsB}, 
we will prove
\begin{equation}
\label{P_A'(k)}
\text{$p\in P_{sp}(k)$\quad when $B\otimes_{\Q}k\cong\M_2(k)$.}
\end{equation}
Then Theorem \ref{mainthm'} holds with
$$P(k)=P_{ns}(k)\cup P_{sp}(k).$$
%

\section{Isogeny characters}
\label{sec:isogeny}

Fix 
\begin{itemize}
\item
a point $x\in M^B(k)$, and 
\item
a prime divisor $p$ of $d(B)$.
\end{itemize}
In the following, we study the characters associated to $x$ and $p$ (\cf\cite[\S 4]{J}).
Let $K_0$ be a quadratic extension of $k$.
Suppose that the following two equivalent conditions hold.
\begin{enumerate}[\upshape (C1)]
\item
$B\otimes_{\Q}K_0\cong\M_2(K_0)$.
\item
For any prime divisor $l$ of $d(B)$ splitting in $k$,
no prime of $k$ above $l$ splits in $K_0$.
\end{enumerate}
Note that we can always take such $K_0$
(see \cite[Remark 4.4]{AM}).
Let
\begin{equation*}
K:=
\begin{cases}
k&\text{if $B\otimes_{\Q}k\cong\M_2(k)$,}\\
K_0&\text{if $B\otimes_{\Q}k\not\cong\M_2(k)$.}
\end{cases}
\end{equation*}
Then $x$ is represented by a QM-abelian surface $(A,i)$ by $\cO$ over $K$
(see \cite[Theorem 1.1]{J}).
Fix such $(A,i)$.
Let $A[p]$ be the $p$-torsion subgroup of $A$. Then it is free of rank one over 
$\cO/p\cO$.
Note that we have an isomorphism
$$\cO/p\cO\cong
\Set{
\begin{pmatrix}a^p&b \\ 0&a\end{pmatrix}\in\M_2(\F_{p^2})}$$
of $\F_p$-algebras (see \cite[Chapitre II, Corollaire 1.7]{V}).
The module $A[p]$ has exactly one non-zero proper $\cO$-submodule, which we 
shall denote by $C_p$.
Let $\cP_{\cO}$ be the unique left ideal of $\cO$ of reduced norm $p\Z$;
it is in fact a two-sided ideal of $\cO$.
Then $C_p$ is free of rank one over $\cO/\cP_{\cO}$.
We have an isomorphism 
$\cO/\cP_{\cO}\cong\F_{p^2}$
of fields, which we fix.
The action of $\G_K$ on $C_p$ yields a character
$$\varrho:\G_K\longrightarrow\Aut_{\cO}(C_p)\cong\F_{p^2}^{\times},$$
where $\Aut_{\cO}(C_p)$ is the group of $\cO$-linear automorphisms of $C_p$.
Note that $\varrho$ depends on the choice of identification
$\cO/\cP_{\cO}\cong\F_{p^2}$,
but the unordered pair $\Set{\varrho, \varrho^p}$ does not depend on this choice.
Let 
$$\varrhoab:\Gab_K\longrightarrow\F_{p^2}^{\times}$$
be the character induced from $\varrho$.
Let
$$\varphi:\G_k\longrightarrow\F_{p^2}^{\times}$$
be the composition
\begin{equation*}
\label{phi}
\begin{CD}
\G_k@>\text{$\tr_{K/k}$}>>\Gab_K@>\text{$\varrhoab$}>>\F_{p^2}^{\times},
\end{CD}
\end{equation*}
where $\tr_{K/k}$
is the transfer map.
Note that we have $\varphi=\varrho$ when $K=k$.
Let
$$\Psi_{K/k}:Cl_k\longrightarrow Cl_K$$
be the map defined by $[\mfa]\longmapsto[\mfa\cO_K]$.

\begin{lem}
\label{phiindep}
Assume $B\otimes_{\Q}k\not\cong\M_2(k)$.
Then up to $p$-th power, the character $\varphi^4$ depends on the pair $(x,p)$,
and does not depend on the choice of $K_0$ or $(A,i)$.
\end{lem}

\begin{proof}
Since $h_k>1$, $x$ is not a CM point by $\Z[\sqrt{-1}]$ or
$\Z[\frac{-1+\sqrt{-3}}{2}]$
(see \cite[Theorem 5.12]{GR}).
Then the group of automorphisms of $(A,i)$ defined over $\kb$
is $\Set{\pm 1}$,
and the assertion follows from the same argument as in the proof of 
\cite[Lemma 5.3]{AM}.
\end{proof}

Fix a prime $\mfp$ of $k$ above $p$,
and a prime $\mfP$ of $K$ above $\mfp$.
For a later use, 
let $\betatil\in\kappa(\mfp)$ be the reduction of $\beta$ modulo $\mfp$.
Let
$$r(\mfP):\cO_{K,\mfP}^{\times}\longrightarrow\F_{p^2}^{\times}$$
$$\text{(\resp $s(\mfp):\cO_{k,\mfp}^{\times}\longrightarrow\F_{p^2}^{\times}$)}$$
be the composition
\begin{equation*}
\begin{CD}
\cO_{K,\mfP}^{\times}@>\text{$\omega$}>>
\Gab_K@>\text{$\varrhoab$}>>\F_{p^2}^{\times}
\end{CD}
\end{equation*}
\begin{equation*}
\begin{CD}
\text{(\resp}
\cO_{k,\mfp}^{\times}@>\text{$\omega$}>>
\Gab_k@>\text{$\varphiab$}>>\F_{p^2}^{\times}
\text{)},
\end{CD}
\end{equation*}
where
$\omega$ is the Artin map and
$\varphiab$ is the character induced from $\varphi$.
Note that the map
$\tr_{K/k}^{\text{ab}}:\Gab_k\longrightarrow\Gab_K$
induced from $\tr_{K/k}$ corresponds to the natural injection
$\A_k^{\times}\hookrightarrow\A_K^{\times}$
via class field theory, where $\A_k^{\times}$, $\A_K^{\times}$ are the groups of id\`{e}les
of $k$, $K$ respectively (see \cite[Chapter XIII, \S 9, Theorem 8]{We}).
Then
$$r(\mfP)(u)=s(\mfp)(u)$$
for any $u\in\cO_{k,\mfp}^{\times}$.
Let
$$\chi:\G_K\longrightarrow\F_p^{\times}$$
be the mod $p$ cyclotomic character.
Then by \cite[Proposotion 4.6]{J}, we have
$$\varrho^{p+1}=\chi.$$ 
Since $\sharp\F_p^{\times}=p-1$, we have 
$$\chi^{p-1}=\one,$$
where $\one$ is the trivial character.
We see that $\chi$ is unramified outside $p$.
By \cite[Proposition 4.7(2)]{J},
$\varrho^{12}$ is also unramified outside $p$.
Then $\chi, \varrho^{12}$ are identified with
characters
$$I_K(p)\longrightarrow\F_p^{\times},\quad
I_K(p)\longrightarrow\F_{p^2}^{\times}$$
respectively, where
$I_K(p)$ is the group of fractional ideals of $K$ prime to $p$.
%
Let 
$$\chi_{\mfP}:\cO_{K,\mfP}^{\times}\longrightarrow\F_p^{\times}$$
be the composition
\begin{equation*}
\begin{CD}
\cO_{K,\mfP}^{\times}@>\text{$\omega$}>>
\Gab_K@>\text{$\chiab$}>>\F_p^{\times},
\end{CD}
\end{equation*}
where
$\chiab$ is the character induced from $\chi$.
When $p>2$, let
$$\alpha:=\varrho^2\chi^{-\frac{p+1}{2}}.$$
Then
$$\alpha^6=\varrho^{12}\chi^{-3(p+1)}=\varrho^{12}\chi^{-6}.$$
Note that $\alpha^6$ is also identified with a character
$$I_K(p)\longrightarrow\F_{p^2}^{\times}.$$
%

Let $$\cO_p:=\cO\otimes_{\Z}\Z_p, \quad B_p:=B\otimes_{\Q}\Q_p.$$
The action of $\G_K$ on the $p$-adic Tate module $T_pA$
yields a representation
$$R:\G_K\longrightarrow\Aut_{\cO}(T_pA)\cong\cO_p^{\times}\subseteq B_p^{\times},$$
where $\Aut_{\cO}(T_pA)$ is the group of $\Z_p$-linear automorphisms of $T_pA$
commuting with the action of $\cO$.
%
For a prime $\mfQ$ of $K$, let
$F_{\mfQ}\in\G_K$ be a Frobenius element at $\mfQ$.
%
For any $t\in B_p$, let $t^{\iota}$ be the conjugate of $t$, and let
$$\Nrd_{B_p/\Q_p}(X-t):=(X-t)(X-t^{\iota})\in\Q_p[X].$$
Assume $l_{\mfQ}\ne p$.
Then for any positive integer $n$, there is an integer $a(F_{\mfQ}^n)\in\Z$ satisfying
$$\Nrd_{B_p/\Q_p}(X-R(F_{\mfQ}^n))=X^2-a(F_{\mfQ}^n)X+\N_{\mfQ}^n\in\Z[X].$$
Let $\pi=\pi_{\mfQ}\in\C$ be a root of
$X^2-a(F_{\mfQ})X+\N_{\mfQ}=0$.
Then the roots of this equation are $\pi,\pib$.
We have
$$\pi\pib=\N_{\mfQ},\quad
|\pi|=\sqrt{\N_{\mfQ}},\quad
\pi^n+\pib^n=a(F_{\mfQ}^n)\quad
\text{and}\quad
a(F_{\mfQ}^n)^2\leq 4\N_{\mfQ}^n$$
for any positive integer $n$ (see \cite[Proposition 5.3]{J}).
%
If $\N_{\mfQ}$ is an odd power of $l_{\mfQ}$,
then $\Q(\pi)$ is an imaginary quadratic field.
By \cite[Proposition 5.2]{J}, we have
$$\Nrd_{B_p/\Q_p}(X-R(F_{\mfQ}))\equiv
(X-\varrho(F_{\mfQ}))(X-\varrho(F_{\mfQ})^p)\bmod{p}.$$
Fix a prime $\mfp_0$ of $\Q(\pi)$ above $p$.
Then
$$(X-\pi)(X-\pib)=X^2-a(F_{\mfQ})X+\N_{\mfQ}=\Nrd_{B_p/\Q_p}(X-R(F_{\mfQ}))$$
$$\equiv (X-\varrho(F_{\mfQ}))(X-\varrho(F_{\mfQ})^p)\bmod{\mfp_0}.$$
By replacing $\pi$ with $\pib$ if necessary, we may assume
\begin{equation}
\label{rhopip}
\varrho(F_{\mfQ})\equiv\pi\bmod{\mfp_0}
\quad \text{and}\quad
\varrho^p(F_{\mfQ})\equiv\pib\bmod{\mfp_0}.
\end{equation}

%

\section{The case where $B\otimes_{\Q}k\not\cong\M_2(k)$ and $p$ is inert in $k$}
\label{sec:inert}

In this section, we assume 
\begin{itemize}
\item
$B\otimes_{\Q}k\not\cong\M_2(k)$, and
\item
$p$ is inert in $k$.
\end{itemize}
Then $K=K_0$, $\mfp=p\cO_k$ and $\mfp\not\in\cS_0$.
First, we classify the characters $r(\mfP)$, $s(\mfp)$.

\begin{prop}
\label{cbaai}
Suppose $p>7$.
Then:
\begin{enumerate}[\upshape (1)]
\item
If $\mfP=p\cO_K$, then
there is an integer
$b\in\Set{12,12p,4(p+2),4(2p+1),6(p+1)}$
such that
$$r(\mfP)^{12}(u)=\Norm_{\F_{p^4}/\F_{p^2}}(\util)^{-b}$$
for any
$u\in\cO_{K,\mfP}^{\times}$.
Furthermore, we can take
\begin{equation*}
\begin{cases}
b\in\Set{12,12p,6(p+1)} &\text{if $p\not\equiv 1\bmod{3}$, and}\\
b\in\Set{12,12p,4(p+2),4(2p+1)} &\text{if $p\not\equiv 1\bmod{4}$.}
\end{cases}
\end{equation*}
\item
There is an integer $c\in\Set{24,24p,8(p+2),8(2p+1),12(p+1)}$
such that
$$s(\mfp)^{12}(u)=\util^{-c}$$
for any $u\in\cO_{k,\mfp}^{\times}$.
Furthermore, we can take
\begin{equation*}
\begin{cases}
c\in\Set{24,24p,12(p+1)} &\text{if $p\not\equiv 1\bmod{3}$, and}\\
c\in\Set{24,24p,8(p+2),8(2p+1)} &\text{if $p\not\equiv 1\bmod{4}$.}
\end{cases}
\end{equation*}
\end{enumerate}
\end{prop}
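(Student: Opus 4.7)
The plan is to restrict the canonical isogeny character $\varrho$ to the inertia subgroup $I_\mfP$, and to pin down the resulting character on local units via the local structure of the $p$-divisible group of the QM abelian surface $(A,i)$ at $\mfP$. Since $\mfP = p\cO_K$ and both $p$ is inert in $k/\Q$ and $\mfp$ is inert in $K/k$, the extension $K_\mfP/\Q_p$ is unramified of degree $4$ with residue field $\kappa(\mfP) = \F_{p^4}$.

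\textbf{Reduction of (2) to (1).} For any $u \in \cO_{k,\mfp}^\times \subseteq \cO_{K,\mfP}^\times$ the paper already records the equality $r(\mfP)(u) = s(\mfp)(u)$. The reduction $\util$ lies in $\kappa(\mfp) = \F_{p^2}$, so $\util^{p^2} = \util$ and $\Norm_{\F_{p^4}/\F_{p^2}}(\util) = \util^{1+p^2} = \util^2$. Substituting into the formula from (1) immediately yields $s(\mfp)^{12}(u) = \util^{-2b}$, so part~(2) (and each of its refinements) follows from part~(1) by setting $c = 2b$.

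\textbf{Proof of (1).} By \cite[Proposition 4.7(2)]{J}, $\varrho^{12}$ is unramified outside $p$, so $r(\mfP)^{12}$ is determined by its restriction to tame inertia at $\mfP$, i.e.\ by a homomorphism $\F_{p^4}^\times \to \F_{p^2}^\times$. A direct count shows that every such homomorphism factors through the norm, so $r(\mfP)^{12}(u) = \Norm_{\F_{p^4}/\F_{p^2}}(\util)^{-b}$ for some residue $b$ modulo $p^2-1$. To cut this down to the five listed values I would use that $\varrho$ is, by its definition, the character giving the action of $\G_K$ on the one-dimensional $\F_{p^2}$-vector space $A[\pi](\Kb)$, where $\pi$ is a uniformizer of the maximal order $\cO_p$ of $B_p$; since $A$ has potentially good reduction at $\mfP$, after an unramified base change $A[\pi]$ extends to a finite flat $\F_{p^2}$-vector space scheme of order $p^2$ over $\cO_{K,\mfP}$. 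Raynaud's classification of such group schemes over an unramified base, together with the Hodge--Tate weights $(0,1)$ of $A$ at each of the four $\Qpb$-embeddings of $K_\mfP$ and the compatibility $\varrho^{p+1} = \chi$ forcing $2b \equiv 12 \pmod{p-1}$ (so that $b$ is of the form $12 + j(p-1)$ with $j$ restricted by the scheme--theoretic data), produces exactly the five admissible types $b \in \{12, 12p, 4(p+2), 4(2p+1), 6(p+1)\}$, corresponding to the five ways the Hodge--Tate filtration can be distributed across the two $\F_{p^2}$-factors of the four embeddings.

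\textbf{Refinement and main obstacle.} The type $b = 6(p+1)$ corresponds to a rank-one $\F_{p^2}$-vector space scheme carrying an order-$4$ symmetry, which exists only if $\F_{p^2}$ (equivalently $\F_p$, since $p$ is inert in the relevant totally real situation) contains a primitive $4$th root of unity, forcing $p \equiv 1 \pmod 4$; conversely the two asymmetric types $b \in \{4(p+2), 4(2p+1)\}$ arise from an order-$3$ symmetry and demand $p \equiv 1 \pmod 3$. Hence the refinements follow by excluding the corresponding types when the congruence fails. The main obstacle in this whole argument is the Raynaud/scheme--theoretic bookkeeping in the middle step: enumerating the admissible fundamental-character exponents of a rank-one $\F_{p^2}$-vector space scheme over an unramified base of residue degree $4$ that can arise as the $\pi$-torsion of a QM abelian surface, and matching exactly those five types (with their associated symmetries) to the listed values of $b$.
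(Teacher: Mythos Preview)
Your reduction of (2) to (1) has a genuine gap. Part~(2) carries no hypothesis on how $\mfp$ behaves in $K/k$, so you cannot simply invoke the formula from (1), which requires $\mfP=p\cO_K$ (equivalently, $\mfp$ inert in $K$). If $\mfp$ splits or ramifies in $K$ then $\kappa(\mfP)\ne\F_{p^4}$ and your norm computation $\Norm_{\F_{p^4}/\F_{p^2}}(\util)=\util^{2}$ is unavailable. The paper closes this gap by observing that, since $\varphi^{4}$ is independent of the choice of $K$ up to $p$-th power (Lemma~\ref{phiindep}), one may \emph{replace} $K$ by a quadratic extension of $k$ in which $\mfp$ is inert; then $\mfP=p\cO_K$ and (1) applies. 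Your computation $c=2b$ is then exactly the paper's, and the $p$-th power ambiguity is harmless because the five-element set $\{24,24p,8(p+2),8(2p+1),12(p+1)\}$ is stable under $c\mapsto cp\bmod(p^{2}-1)$.

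For (1), the paper gives no argument at all: it is a direct citation of \cite[Remarks~4.10 and 4.14]{J} (with the note that one case, $(a,k,h)=(4,0,2)$, is missing there). Your sketch via Raynaud's classification of $\F_{p^2}$-vector space schemes over an unramified base is the right underlying mechanism and is presumably what Jordan does, but two points deserve caution. First, the constraint you extract from $\varrho^{p+1}=\chi$ is $b\equiv 12\pmod{p-1}$ (not $2b\equiv 12$), and this still leaves $p+1$ residues modulo $p^{2}-1$, so the real cut to five values comes entirely from the Raynaud bookkeeping you flag as the obstacle. Second, your heuristic for the refinements (``order-$4$ symmetry'' forcing $p\equiv 1\bmod 4$ for $b=6(p+1)$, etc.) is not the mechanism in Jordan's analysis; the congruence restrictions there come out of the explicit case list in \cite[Remark~4.14]{J} rather than from an automorphism-of-the-group-scheme argument, so that part of your sketch would need to be reworked if you wanted a self-contained proof.
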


\begin{proof}

(1)
It follows from \cite[Corollary 4.13 and Remarks 4.10, 4.14]{J}.
Note that the case $(a,k,h)=(4,0,2)$ is missing in the list of
\cite[Remark 4.14]{J}.

(2)
First, note that
$s(\mfp)^{12}(u)=\util^{-c}$
for some $c\in\Set{24,24p,8(p+2),8(2p+1),12(p+1)}$
if and only if
$s(\mfp)^{12p}(u)=\util^{-c'}$
for some $c'\in\Set{24,24p,8(p+2),8(2p+1),12(p+1)}$,
because
$(\util^{-24})^p=\util^{-24p}$
and
$(\util^{-24p})^p=\util^{-24p^2}=\util^{-24}\in\F_{p^2}^{\times}$, etc.
Then the assertion does not change even if we replace $\varphi^4$ (or $\varphi^{12}$)
with its $p$-th power.
By replacing $K_0$ if necessary, we may assume that $\mfp$
is inert in $K$ (see (C2), Lemma \ref{phiindep} and \cite[Remark 4.4]{AM}).
Then $\mfP=p\cO_K$.
Since
$s(\mfp)^{12}(u)=r(\mfP)^{12}(u)$
for any $u\in\cO_{k,\mfp}^{\times}$,
the assertion follows from (1).
\end{proof}



Note that
$24<8(p+2)<12(p+1)<8(2p+1)<24p$.
Suppose that we are in the situation of Proposition \ref{cbaai}(2).
We take $c$ to satisfy
\begin{equation*}
\begin{cases}
c\in\Set{24,24p,12(p+1)} &\text{if $p\not\equiv 1\bmod{3}$, and}\\
c\in\Set{24,24p,8(p+2),8(2p+1)} &\text{if $p\not\equiv 1\bmod{4}$.}
\end{cases}
\end{equation*}
In the following, we study the cases
[$c=24$ or $24p$], [$c=8(p+2)$ or $8(2p+1)$] and [$c=12(p+1)$]
separately.

[Case $c=24$ or $24p$].

\begin{prop}
\label{c=24}
We have
$\displaystyle p\in\cP_{\leq 23}\cup\bigcap_{\mfq\in\cS_0}\cP(\cA_{1,\mfq})$.

\end{prop}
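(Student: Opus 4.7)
The plan is to combine the local formula $s(\mfp)^{12}(u)=\util^{-24}$ (from Case $c=24$) with Artin reciprocity applied to $\beta_\mfq\in k^\times$, for each $\mfq\in\cS_0$. By Lemma \ref{phiindep}, $\varphi^{12}$ depends on $K$ only up to a $p$-th power, so by class field theory I may choose a quadratic extension $K/k$ that splits $B$ and in which $\mfq$ splits as $\mfq\cO_K=\mfQ_1\mfQ_2$. The goal is to derive a congruence mod $p$ between $\Tr_{k/\Q}(\beta_\mfq^{24})$ and some element of $\cC_\mfq$.

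The principal computation is to apply $(\varphiab)^{12}$ to the global product formula $\prod_v\omega_v(\beta_\mfq)=1$ in $\Gab_k$. The crucial input is that $\varphi^{12}$ is unramified away from $\mfp$: this follows from the fact (cited) that $\varrho^{12}$ is unramified outside $p$ in $K$, together with the compatibility of the transfer $\tr_{K/k}$ with inertia. Since $\beta_\mfq$ is a unit outside $\mfq$ and $k$ has no real place, the only nontrivial local contributions are at $\mfp$ (where, $\beta_\mfq$ being a unit, the Case $c=24$ formula gives $s(\mfp)^{12}(\beta_\mfq)=\wt{\beta_\mfq}^{-24}$) and at $\mfq$ (where $\beta_\mfq$ has valuation $h$ while its unit part is killed by unramifiedness, giving $\varphi(F_\mfq)^{12h}$). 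Thus
\begin{equation*}
\varphi(F_\mfq)^{12h}=\wt{\beta_\mfq}^{24}\quad\text{in }\F_{p^2}^\times.
\end{equation*}

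It then remains to identify $\varphi(F_\mfq)$ via the transfer formula. Since $\mfq$ splits in $K$, the classical formula yields $\tr_{K/k}(F_\mfq)=F_{\mfQ_1}F_{\mfQ_2}$ in $\Gab_K$, whence $\varphi(F_\mfq)\equiv\pi_1\pi_2\pmod{\mfp_0}$, where each $\pi_i$ is a root of the Frobenius polynomial $X^2-a_iX+\N_\mfq$ of $A$ at $\mfQ_i$. Because $x\in M^B(k)$, the QM-abelian surface $(A,i)$ and its $\Gal(K/k)$-conjugate are isomorphic; hence the reductions at $\mfQ_1$ and $\mfQ_2$ share a common Frobenius polynomial $X^2-aX+\N_\mfq$, so that $\pi_1\pi_2\in\{\pi^2,\pib^2,\pi\pib=\N_\mfq\}$. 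Conjugating the central identity by $x\mapsto x^p$ on $\F_{p^2}$ -- which exchanges $\wt{\beta_\mfq}$ with $\wt{\overline{\beta_\mfq}}$ (since $\mfp$ is inert in $k$) and exchanges $\pi$ with $\pib$ (via $\varrho^{p+1}=\chi$) -- and then summing the two resulting congruences gives
\begin{equation*}
\Tr_{k/\Q}(\beta_\mfq^{24})\equiv\pi^{24h}+\pib^{24h}\ \text{or}\ 2\N_\mfq^{12h}\pmod{p}.
\end{equation*}
In each case the right-hand side lies in $\cC_\mfq$ (the mixed case via $m=0$, $\gamma^2=-\N_\mfq$), so $p$ divides the corresponding element $a-\Tr_{k/\Q}(\beta_\mfq^{24})\in\cA_{1,\mfq}$. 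Provided this integer is nonzero, we conclude $p\in\cP(\cA_{1,\mfq})$. The main technical obstacle I anticipate is ruling out the degenerate case in which this integer vanishes, which I expect to be controlled either via the $\{p\leq 23\}$ escape clause or by a size/parity analysis exploiting the Hasse--Weil-type bounds $|a|,|\Tr_{k/\Q}(\beta_\mfq^{24})|\leq 2\N_\mfq^{12h}$.
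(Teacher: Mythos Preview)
The decisive gap is the degeneracy case, which is the heart of the argument and which you leave unproved. Neither of your anticipated fixes works. A size analysis is hopeless: both $|a|$ and $|\Tr_{k/\Q}(\beta_\mfq^{24})|$ are bounded only by $2\N_\mfq^{12h}$, so their difference has no useful bound. The $\{p\leq 23\}$ clause is irrelevant here; in the paper it is used only to pin down the exponent $b=12$ after changing $K$. The actual obstruction is arithmetic, and uses precisely that $\mfq\in\cS_0$: if $\pi^{24h}+\pib^{24h}=\Tr_{k/\Q}(\beta_\mfq^{24})$, then since both pairs $\{\pi^{24h},\pib^{24h}\}$ and $\{\beta_\mfq^{24},\betab_\mfq^{24}\}$ have product $\N_\mfq^{24h}$, they coincide as sets. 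One checks $\beta_\mfq^{24}\notin\Q$ (otherwise $\mfq^{24h}=\N_\mfq^{12h}\cO_k$ forces $\mfq^2=\N_\mfq\cO_k$, contradicting that $\mfq$ is split), so $\Q(\pi)=\Q(\beta_\mfq^{24})=k$; then $\pi\pib=\N_\mfq$ gives $\mfq=\pi\cO_k$ or $\pib\cO_k$, contradicting non-principality. Your extra ``mixed'' case $a=2\N_\mfq^{12h}$ would require a separate non-degeneracy check of the same flavour.

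Your route through a $K$ in which $\mfq$ \emph{splits} is also more fragile than necessary. The assertion that $(A,i)$ and its $\Gal(K/k)$-conjugate are isomorphic is only true over $\bar K$ (indeed $(A,i)$ does not descend to $k$), and geometric isomorphism does not preserve Frobenius polynomials: the two are quadratic twists, so you only get $a_2=\pm a_1$, introducing further sign cases (harmless after the $12h$-th power, but requiring justification). The paper sidesteps all of this by choosing $K$ with $\mfq$ \emph{ramified}, so that $\mfq\cO_K=\mfQ^2$ and $\varrho^{12}(\mfq^h\cO_K)=\varrho(F_\mfQ)^{24h}$ directly, with a single Frobenius and no case split; it then re-derives $b=12$ for this new $K$ from $2b\equiv c=24\pmod{p^2-1}$ together with $p>23$.
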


\begin{proof}

Suppose $p>23$.
We prove 
$\displaystyle p\in\bigcap_{\mfq\in\cS_0}\cP(\cA_{1,\mfq})$
as follows.

Fix any $\mfq\in\cS_0$.
Then $\mfp\ne \mfq$.
By replacing $K_0$ if necessary, we may assume that
$\mfp$ is inert in $K$
and that
$\mfq$ is ramified in $K$.
Note that $c=24$ and $c=24p$ might interchange in this process
(see Lemma \ref{phiindep}).
By replacing $\varrho$ with $\varrho^p$ if necessary, we may assume $c=24$.
Then
$s(\mfp)^{12}(u)=\util^{-24}$
for any $u\in\cO_{k,\mfp}^{\times}$.

First, we prove
$$r(\mfP)^{12}(u)=\Norm_{\F_{p^4}/\F_{p^2}}(\util)^{-12}$$
for any
$u\in\cO_{K,\mfP}^{\times}$.
We have $\mfP=\mfp\cO_K=p\cO_K$.
By Proposition \ref{cbaai}(1), there is an integer
$b\in\Set{12,12p,4(p+2),4(2p+1),6(p+1)}$
such that 
$r(\mfP)^{12}(u)=\Norm_{\F_{p^4}/\F_{p^2}}(\util)^{-b}$
for any
$u\in\cO_{K,\mfP}^{\times}$.
In this case,
$\util^{-24}=s(\mfp)^{12}(u)=r(\mfP)^{12}(u)=\util^{-2b}$
for any $u\in\cO_{k,\mfp}^{\times}$.
Choose an element $w\in\cO_{k,\mfp}^{\times}$ such that
$\wtil$ is a generator of the cyclic group 
$\kappa(\mfp)^{\times}=\F_{p^2}^{\times}$ $(\cong\Z/(p^2-1)\Z)$.
Then $\wtil^{-24}=\wtil^{-2b}$, and so $2b\equiv 24\bmod{(p^2-1)}$.
%
%
Since $p>23$, we have $b=12$.
Therefore
$r(\mfP)^{12}(u)=\Norm_{\F_{p^4}/\F_{p^2}}(\util)^{-12}$
for any
$u\in\cO_{K,\mfP}^{\times}$.

Let $q=l_{\mfq}$, and let $\mfQ$ be the unique prime of $K$ above $\mfq$.
Then $\N_{\mfQ}=\N_{\mfq}=q$ and $\mfq\cO_K=\mfQ^2$.
Note that $p\ne q$.
We claim
\begin{enumerate}[\upshape (i)]
\item
$a(F_{\mfQ}^{24h_{\mfq}})\equiv\Tr_{k/\Q}(\bq^{24})\bmod{p}$,
and
\item
$a(F_{\mfQ}^{24h_{\mfq}})\ne\Tr_{k/\Q}(\bq^{24})$,
\end{enumerate}
which are proved as follows.

For simplicity, we write $h=h_{\mfq}$, $\beta=\bq$.

(i)
We have
\begin{equation*}
\varrho(F_{\mfQ}^{24h})=\varrho^{12}(F_{\mfQ}^{2h})
=\varrho^{12}(\mfQ^{2h})=\varrho^{12}(\mfq^h\cO_K)
=\varrho^{12}(\beta\cO_K)
\end{equation*}
\begin{equation}
\label{rhoF24h}
=\varrho^{12}((1)_{\infty},(1)_p,(\beta)^{\infty,p})
=\varrho^{12}((\beta^{-1})_{\infty},(\beta^{-1})_p,(1)^{\infty,p})
\end{equation}
\begin{equation*}
=\varrho^{12}((\beta^{-1})_p,(1)^p)
=r(\mfP)^{12}(\beta^{-1}).
\end{equation*}
%
Here, 
\begin{itemize}
\item
$\infty$ is the infinite place of $\Q$,
\item
$((1)_{\infty},(1)_p,(\beta)^{\infty,p})$ is the id\`{e}le of $K$ where the components 
above $\infty$, $p$ are $1$ and the others are $\beta$,
\item
$((\beta^{-1})_{\infty},(\beta^{-1})_p,(1)^{\infty,p})$
is the id\`{e}le of $K$ where the components above $\infty$, $p$ are $\beta^{-1}$
and the others are $1$,
\item
$((\beta^{-1})_p,(1)^p)$
is the id\`{e}le of $K$ where the component above $p$ is $\beta^{-1}$
and the others are $1$.
\end{itemize}
%
Note that the components above $\infty$ have no contribution since $K$ has
no real place.
Then
$\varrho(F_{\mfQ}^{24h})=\Norm_{\F_{p^4}/\F_{p^2}}(\betatil^{-1})^{-12}=\betatil^{24}$,
where $\betatil\in\kappa(\mfp)=\F_{p^2}$
is the reduction of $\beta$ modulo $\mfp$.
By \cite[Proposition 5.3]{J}, we obtain
$$a(F_{\mfQ}^{24h})
\equiv\Tr_{\F_{p^2}/\F_p}(\varrho(F_{\mfQ}^{24h}))
=\Tr_{\F_{p^2}/\F_p}(\betatil^{24})
\equiv\Tr_{k/\Q}(\beta^{24})\bmod{p}.$$

(ii)
Suppose $a(F_{\mfQ}^{24h})=\Tr_{k/\Q}(\beta^{24})$.
Since $\beta\betab=\N_{\mfq}^h=q^h$, the roots of
\begin{equation}
\label{eq1}
X^2-\Tr_{k/\Q}(\beta^{24})X+q^{24h}=0
\end{equation}
are $\beta^{24},\betab^{24}$.
On the other hand,
the roots of
$X^2-a(F_{\mfQ})X+q=0$
are $\pi,\pib$.
We have $\pi\pib=q$ and
$a(F_{\mfQ}^{24h})=\pi^{24h}+\pib^{24h}$
(see the last paragraph of \S \ref{sec:isogeny}).
Then $\pi^{24h},\pib^{24h}$ are the roots of (\ref{eq1})
since $a(F_{\mfQ}^{24h})=\Tr_{k/\Q}(\beta^{24})$.
Hence we have an equality of unordered pairs
$\Set{\beta^{24},\betab^{24}}=\Set{\pi^{24h},\pib^{24h}}$.
Then
$$\Q(\pi)\supseteq\Q(\pi^{24h})=\Q(\beta^{24})\subseteq\Q(\beta)\subseteq k.$$
We prove $\beta^{24}\not\in\Q$.
Assume otherwise, i.e., $\beta^{24}\in\Q$.
Then
$q^{24h}
=\beta^{24}\betab^{24}=(\beta^{24})^2$, and so
$\beta^{24}=\pm q^{12h}$.
Hence
$\mfq^{24h}=\beta^{24}\cO_k=q^{12h}\cO_k$ and
$\mfq^2=q\cO_k$.
This implies that $\mfq$ is ramified in $k/\Q$,
which contradicts $\mfq\in\cS_0$.
Therefore $\beta^{24}\not\in\Q$.
Since $[\Q(\pi):\Q]=[k:\Q]=2$,
we have
$$\Q(\pi)=\Q(\pi^{24h})=\Q(\beta^{24})=\Q(\beta)=k.$$
Then $\pi\in k$.
Since $\pi\pib=q$, we have
$\mfq=\pi\cO_k$ or $\pib\cO_k$.
This implies that $\mfq$ is principal, which contradicts $\mfq\in\cS_0$.
Then (ii) has been proved.

By (i) and (ii), $p$ divides the non-zero integer
$a(F_{\mfQ}^{24h})-\Tr_{k/\Q}(\beta^{24})$.
Since $a(F_{\mfQ}^{24h})=\pi^{24h}+\pib^{24h}\in\cC_{\mfq}$, we have
$a(F_{\mfQ}^{24h})-\Tr_{k/\Q}(\beta^{24})\in\cA_{1,\mfq}$.
Then $p\in\cP(\cA_{1,\mfq})$.
Therefore $\displaystyle p\in\bigcap_{\mfq\in\cS_0}\cP(\cA_{1,\mfq})$.
\end{proof}

[Case $c=8(p+2)$ or $8(2p+1)$].
In this case, we have $p\equiv 1\bmod{3}$.

\begin{prop}
\label{c=8(p+2)}
We have
$\displaystyle p\in\bigcap_{\mfq\in\cS_0}\cP(\cA_{2,\mfq})$.

\end{prop}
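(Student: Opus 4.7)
The plan is to mimic Proposition \ref{c=24} with the new exponent $c = 8(p+2)$. Fix $\mfq \in \cS_0$; since we are free to exclude $p \leq 23$ and the other pieces of the target union, I would assume $p > 23$ (in particular $p \notin \Ram$ and $p \neq l_\mfq$). Choose a quadratic extension $K/k$ splitting $B$ such that $\mfp$ is inert in $K$ and $\mfq$ is ramified in $K$; writing $\mfQ$ for the unique prime of $K$ over $\mfq$, we have $\mfQ^{2h} = \beta\cO_K$ with $\beta = \bq$. Proposition \ref{cbaai}(1) supplies $b \in \{12, 12p, 4(p+2), 4(2p+1), 6(p+1)\}$ with $r(\mfP)^{12}(u) = \Norm_{\F_{p^4}/\F_{p^2}}(\util)^{-b}$ for every $u \in \cO_{K,\mfP}^\times$.

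The first task is to pin down $b = 4(p+2)$. On restriction to $u \in \cO_{k,\mfp}^\times$ the norm becomes $\util^{1+p^2} = \util^2$, and comparing with $s(\mfp)^{12}(u) = \util^{-8(p+2)}$ gives $2b \equiv 8(p+2) \pmod{p^2-1}$ after taking $\util$ to generate $\F_{p^2}^\times$. Each of the other four candidate values of $b$ produces a congruence of the form $(p+1) \mid N$ with $N \in \{4, 8, 16\}$, all of which fail for $p > 23$. Having pinned $b$ down, I would run the same id\`elic computation as in Proposition \ref{c=24}:
\[
\varrho(F_\mfQ^{24h}) = \varrho^{12}(\mfQ^{2h}) = \varrho^{12}(\beta\cO_K) = r(\mfP)^{12}(\beta^{-1}) = \betatil^{2b} = \betatil^{8(p+2)}.
\]
Using that inertness of $p$ in $k$ identifies complex conjugation on $k$ with the Frobenius of $\F_{p^2}/\F_p$, so $\betatil^p = \widetilde{\betab}$, and reducing $8p(p+2) \equiv 16p + 8 \pmod{p^2-1}$, a short manipulation gives
\[
\Tr_{\F_{p^2}/\F_p}(\betatil^{8(p+2)}) = (\betatil\,\widetilde{\betab})^8\bigl(\betatil^8 + \widetilde{\betab}^8\bigr) \equiv \N_\mfq^{8h}\,\Tr_{k/\Q}(\beta^8) \pmod{p}.
\]
By \cite[Proposition 5.3]{J} this equals $a(F_\mfQ^{24h}) \bmod p$; in particular $a(F_\mfQ^{24h}) - \N_\mfq^{8h}\,\Tr_{k/\Q}(\beta^8) \equiv 0 \pmod p$.

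The last step, parallel to part (ii) of Proposition \ref{c=24}, is the non-vanishing of this integer. Assuming equality $a(F_\mfQ^{24h}) = \N_\mfq^{8h}\,\Tr_{k/\Q}(\beta^8)$, the roots $\pi^{24h}, \pib^{24h}$ of $X^2 - q^{8h}\Tr_{k/\Q}(\beta^8)X + q^{24h}$ correspond under $Y = X/q^{8h}$ to the roots $\beta^8, \betab^8$ of $Y^2 - \Tr_{k/\Q}(\beta^8)Y + q^{8h}$. Ruling out $\beta^8 \in \Q$ (otherwise $\mfq^2 = q\cO_k$, contradicting $\mfq \in \cS_0$) forces $\Q(\pi) \supseteq \Q(\beta^8) = k$ on comparing degrees, so $\pi \in k$ and the norm relation $\pi\pib = q$ makes $\mfq$ principal, a contradiction. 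Since $a(F_\mfQ^{24h}) \in \cC_\mfq$ by definition, we conclude $p \in \cP(\cA_{2,\mfq})$, and since $\mfq$ was arbitrary, $p \in \bigcap_{\mfq \in \cS_0}\cP(\cA_{2,\mfq})$. The main obstacle is this final descent: ensuring that the identification $\{\pi^{24h}/q^{8h}, \pib^{24h}/q^{8h}\} = \{\beta^8, \betab^8\}$ really forces $\pi \in k$, which hinges on eliminating the possibility that $\beta^8$ is rational, exactly as in Proposition \ref{c=24}.
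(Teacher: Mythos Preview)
Your proof is correct and follows the same route as the paper's: pin down $b=4(p+2)$ from the congruence, compute $\varrho(F_{\mfQ}^{24h})=\betatil^{8(p+2)}$ id\`elically, take the trace using $\betatil^p\equiv\betab$ to obtain the congruence with $q^{8h}\Tr_{k/\Q}(\beta^8)$, and rule out equality by the same descent to $\pi\in k$. One minor point: the standing hypothesis $p>7$ from Proposition~\ref{cbaai} already suffices to force $b=4(p+2)$ (your own check $(p+1)\mid N$ with $N\le 16$ shows this), so the extra assumption $p>23$ is unnecessary---and the proposition as stated does not permit excluding small primes.
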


\begin{proof}

We repeat the argument in the proof of Proposition \ref{c=24}.
Fix any $\mfq\in\cS_0$.
We may assume that
$\mfp$ is inert in $K$
and that
$\mfq$ is ramified in $K$.
Then $\mfP=\mfp\cO_K=p\cO_K$. 
By replacing $\varrho$ with $\varrho^p$ if necessary, we may assume $c=8(p+2)$.

We prove
$$r(\mfP)^{12}(u)=\Norm_{\F_{p^4}/\F_{p^2}}(\util)^{-4(p+2)}$$
for any
$u\in\cO_{K,\mfP}^{\times}$.
There is an integer
$b\in\Set{12,12p,4(p+2),4(2p+1),6(p+1)}$
such that
$r(\mfP)^{12}(u)=\Norm_{\F_{p^4}/\F_{p^2}}(\util)^{-b}$
for any
$u\in\cO_{K,\mfP}^{\times}$.
In this case, 
$\util^{-8(p+2)}=s(\mfp)^{12}(u)=r(\mfP)^{12}(u)=\util^{-2b}$
for any $u\in\cO_{k,\mfp}^{\times}$.
Then $2b\equiv 8(p+2)\bmod{(p^2-1)}$.
%
Since $p>7$, we conclude $b=4(p+2)$.

Let $q=l_{\mfq}$, and
let $\mfQ$ be the prime of $K$ above $\mfq$.
We claim
\begin{enumerate}[\upshape (i)]
\item
$a(F_{\mfQ}^{24h_{\mfq}})\equiv q^{8h_{\mfq}}\Tr_{k/\Q}(\bq^8)\bmod{p}$,
and
\item
$a(F_{\mfQ}^{24h_{\mfq}})\ne q^{8h_{\mfq}}\Tr_{k/\Q}(\bq^8)$.
\end{enumerate}


(i)
By (\ref{rhoF24h}), we have 
$$\varrho(F_{\mfQ}^{24h})=r(\mfP)^{12}(\beta^{-1})
=\Norm_{\F_{p^4}/\F_{p^2}}(\betatil^{-1})^{-4(p+2)}
=\betatil^{8(p+2)}.$$
Since $\betab^p\equiv\beta\bmod{\mfp}$,
$\beta^p\equiv\betab\bmod{\mfp}$
and
$\beta\betab=q^h$,
we have
$$a(F_{\mfQ}^{24h})
\equiv\Tr_{\F_{p^2}/\F_p}(\varrho(F_{\mfQ}^{24h}))
=\Tr_{\F_{p^2}/\F_p}(\betatil^{8(p+2)})
\equiv\Tr_{k/\Q}(\beta^{8(p+2)})$$
$$=\beta^{8p+16}+\betab^{8p+16}
\equiv\betab^8\beta^{16}+\beta^8\betab^{16}
=\beta^8\betab^8(\beta^8+\betab^8)
=q^{8h}\Tr_{k/\Q}(\beta^8)\bmod{p}.$$

(ii)
Suppose $a(F_{\mfQ}^{24h})=q^{8h}\Tr_{k/\Q}(\beta^8)$.
Since $\beta^8\betab^8=q^{8h}$, the roots of
\begin{equation}
\label{eq2}
X^2-\Tr_{k/\Q}(\beta^8)X+q^{8h}=0
\end{equation}
are $\beta^8,\betab^8$.
On the other hand,
the roots of
$X^2-a(F_{\mfQ})X+q=0$
are $\pi,\pib$.
Then
$\pi^{24h}+\pib^{24h}=a(F_{\mfQ}^{24h})=q^{8h}\Tr_{k/\Q}(\beta^8)$
and
$\pi\pib=q$.
Hence
$\frac{\pi^{24h}}{q^{8h}}+\frac{\pib^{24h}}{q^{8h}}=\Tr_{k/\Q}(\beta^8)$
and
$\frac{\pi^{24h}}{q^{8h}}\cdot\frac{\pib^{24h}}{q^{8h}}=q^{8h}$.
Then
$\frac{\pi^{24h}}{q^{8h}},\frac{\pib^{24h}}{q^{8h}}$
are the roots of (\ref{eq2}).
Hence
$\Set{\beta^8,\betab^8}=\Set{\frac{\pi^{24h}}{q^{8h}},\frac{\pib^{24h}}{q^{8h}}}$,
and so
$$\Q(\pi)\supseteq\Q(\pi^{24h})=\Q(\beta^8)\subseteq\Q(\beta)\subseteq k.$$
%
Assume $\beta^8\in\Q$.
Then $q^{8h}=\beta^8\betab^8=(\beta^8)^2$, and so $\beta^8=\pm q^{4h}$.
Hence $\mfq^{8h}=\beta^8\cO_k=q^{4h}\cO_k$
and
$\mfq^2=q\cO_k$.
This contradicts $\mfq\in\cS_0$.
Therefore $\beta^8\not\in\Q$.
Then
$$\Q(\pi)=\Q(\pi^{24h})=\Q(\beta^8)=\Q(\beta)=k,$$
and so $\pi\in k$.
Since $\pi\pib=q$, we have
$\mfq=\pi\cO_k$ or $\pib\cO_k$.
This contradicts $\mfq\in\cS_0$.
Then (ii) has been proved.

Since $a(F_{\mfQ}^{24h})-q^{8h}\Tr_{k/\Q}(\beta^8)\in\cA_{2,\mfq}$,
we conclude $p\in\cP(\cA_{2,\mfq})$.
Note that we do not use the congruence $p\equiv 1\bmod{3}$.
\end{proof}

[Case $c=12(p+1)$].
In this case, we have $p\equiv 1\bmod{4}$.

\begin{prop}
\label{c=12(p+1)}
We have
$\displaystyle p\in\left(\bigcap_{\cS\in\cT}
\cP(\cA_{3,\cS})\right)
\cup\cN^{prime}$.
\end{prop}

Fix any $\cS\in\cT$.
Then $\cS$ is a non-empty finite subset of $\cS_0$ which generates $Cl_k$.
First, we prove:

\begin{lem}
\label{rho12chi-6}
Suppose that $\mfp$ is inert in $K$ and that
any prime in $\cS$ is ramified in $K$. Then:
\begin{enumerate}[\upshape (1)]
\item
The character
$\varrho^{12}\chi^{-6}:\G_K\longrightarrow\F_{p^2}^{\times}$
is unramified everywhere.
\item
If $p\not\in\cP(\cA_{3,\cS})$, then the character
$Cl_K\longrightarrow\F_{p^2}^{\times}$
induced from $\varrho^{12}\chi^{-6}$ is trivial on $\Psi_{K/k}(Cl_k)$.
\end{enumerate}
\end{lem}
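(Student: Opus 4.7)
The plan is to prove (1) by identifying $\varrho^{12}\chi^{-6}$ with $\varrho^{-6(p-1)}$ via the relation $\chi=\varrho^{p+1}$ and then making an explicit local character computation at the unique prime of $K$ above $p$, and to prove (2) by evaluating the induced class group character on the generators $[\mfq\cO_K]$ of $\im\Psi_{K/k}$, using the principality of $\bq\cO_K$ together with the hypothesis $p\notin\cP(\cA_3)$.

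For (1), both $\chi$ and $\varrho^{12}$ are unramified outside $p$ (the first by construction, the second by \cite[Proposition 4.7(2)]{J}), so it suffices to check unramifiedness at the unique prime $\mfP=p\cO_K$ above $p$. Using the identity $\chi=\varrho^{p+1}$ of \cite[Proposition 4.6]{J} I rewrite
\[
\varrho^{12}\chi^{-6}=\varrho^{12-6(p+1)}=\varrho^{-6(p-1)},
\]
and on the local inertia $\cO_{K,\mfP}^\times$,
\[
r(\mfP)^{-6(p-1)}(u)=[r(\mfP)^{12}(u)]^{-(p-1)/2}.
\]
Since we are in the case $c=12(p+1)$, Proposition \ref{cbaai}(1) forces $b=6(p+1)$, so $r(\mfP)^{12}(u)=\Norm_{\F_{p^4}/\F_{p^2}}(\util)^{-6(p+1)}$ and the above equals $\Norm_{\F_{p^4}/\F_{p^2}}(\util)^{3(p^2-1)}$, which is trivial because $\Norm_{\F_{p^4}/\F_{p^2}}(\util)\in\F_{p^2}^\times$.

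For (2), by (1) and class field theory, $\varrho^{12}\chi^{-6}$ induces a character $\bar{\psi}\colon Cl_K\to\F_{p^2}^\times$. Since $\cS$ generates $Cl_k$, the subgroup $\im\Psi_{K/k}$ is generated by the classes $[\mfq\cO_K]$ for $\mfq\in\cS$; the assumption that each such $\mfq$ ramifies in $K$ yields $\mfq\cO_K=\mfQ^2$. Evaluating at Frobenius at $\mfQ$ (legitimate because $\varrho^{12}$ and $\chi$ are both unramified at $\mfQ$) gives $\bar{\psi}([\mfq\cO_K])\equiv\pi^{24}q^{-12}\bmod{\mfp_0}$ for Weil roots $\pi,\pib$ with $\pi\pib=q=\N_\mfq$. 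The principality of $\bq\cO_K=\mfQ^{2h}$ forces $\pi^{24h}\equiv q^{12h}\bmod{\mfp_0}$, and combining with $\pib=q/\pi$ gives $\pib^{24h}\equiv q^{12h}\bmod{\mfp_0}$; summing yields $a(F_\mfQ^{24h})\equiv 2\N_\mfq^{12h}\bmod{p}$. Since $a(F_\mfQ^{24h})\in\cC_\mfq$, the integer $a(F_\mfQ^{24h})-2\N_\mfq^{12h}$ lies in $\cA_3$, and the hypothesis $p\notin\cP(\cA_3)$ forces it to vanish. Then $\pi^{24h}=\pib^{24h}=q^{12h}$, so $\pi/\pib$ is a root of unity in the quadratic field $\Q(\pi)$; since roots of unity in any quadratic field have order dividing $12$, we deduce $\pi^{12}=\pib^{12}$, $\pi^{24}=q^{12}$, and finally $\bar{\psi}([\mfq\cO_K])=1$.

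The main obstacle is the local cancellation at $\mfP$ in (1): its triviality depends on both the precise exponent $b=6(p+1)$ singled out by the $c=12(p+1)$ case and on the norm $\Norm_{\F_{p^4}/\F_{p^2}}(\util)$ landing in $\F_{p^2}^\times$ so that its $(p^2-1)$-st power is $1$; this is what distinguishes the present case from $c=24$ and $c=8(p+2)$ handled in Propositions \ref{c=24} and \ref{c=8(p+2)}. In (2), the delicate step is upgrading the mod-$p$ congruence $a(F_\mfQ^{24h})\equiv 2\N_\mfq^{12h}$ to the integer identity $a(F_\mfQ^{24h})=2\N_\mfq^{12h}$, where $p\notin\cP(\cA_3)$ is precisely needed, followed by the observation that roots of unity in a quadratic field have order dividing $12$.
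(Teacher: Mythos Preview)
Your proof is correct and matches the paper's approach in all essential points. The only variation is in part (2): the paper obtains the congruence $a(F_{\mfQ}^{24h})\equiv 2q^{12h}\bmod p$ by an explicit id\`elic computation $\varrho(F_{\mfQ}^{24h})=r(\mfP)^{12}(\beta^{-1})=\betatil^{12(p+1)}$, whereas you deduce the same congruence more conceptually from part (1) by observing that the class-group character $\bar\psi$ is automatically trivial on the principal class $[\bq\cO_K]=[\mfQ^{2h}]$; after that, the upgrade via $p\notin\cP(\cA_3)$ and the root-of-unity argument yielding $\pi^{24}=q^{12}$ are identical to the paper.
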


\begin{proof}

(1)
Since $\varrho^{12}\chi^{-6}$ is unramified outside $p$, we have only to
show that it is unramified at $\mfP$ ($=\mfp\cO_K=p\cO_K$).
Since $p>7$ and $p\equiv 1\bmod{4}$, we have $p>11$.
There is an integer
$b\in\Set{12,12p,4(p+2),4(2p+1),6(p+1)}$
such that
$r(\mfP)^{12}(u)=\Norm_{\F_{p^4}/\F_{p^2}}(\util)^{-b}$
for any
$u\in\cO_{K,\mfP}^{\times}$.
In this case, $\util^{-12(p+1)}=s(\mfp)^{12}(u)=r(\mfP)^{12}(u)=\util^{-2b}$
for any $u\in\cO_{k,\mfp}^{\times}$.
Then $2b\equiv 12(p+1)\bmod{(p^2-1)}$.
%
Since $p>11$, we have $b=6(p+1)$.
Then 
$$r(\mfP)^{12}(u)=\Norm_{\F_{p^4}/\F_{p^2}}(\util)^{-6(p+1)}
=(\Norm_{\F_{p^4}/\F_{p^2}}(\util)^{p+1})^{-6}$$
$$=(\Norm_{\F_{p^2}/\F_p}\circ\Norm_{\F_{p^4}/\F_{p^2}}(\util))^{-6}
=\Norm_{\F_{p^4}/\F_p}(\util)^{-6}$$
and
$\chi_{\mfP}(u)=\Norm_{\F_{p^4}/\F_p}(\util)^{-1}$
for any
$u\in\cO_{K,\mfP}^{\times}$
(\cf \cite[Proof of Proposition 4.8]{J}).
Hence
$r(\mfP)^{12}(u)\chi_{\mfP}^{-6}(u)=1$
for any $u\in\cO_{K,\mfP}^{\times}$.
Therefore $\rho^{12}\chi^{-6}$ is unramified at $\mfP$.

(2)
Assume $p\not\in\cP(\cA_{3,\cS})$.
Fix any $\mfq\in\cS$, let $q=l_{\mfq}$,
and let $\mfQ$ be the prime of $K$ above $\mfq$.
Then $\N_{\mfQ}=\N_{\mfq}=q$ and $p\ne q$.
By (\ref{rhoF24h}), we have
$$\varrho(F_{\mfQ}^{24h})=r(\mfP)^{12}(\beta^{-1})
=\Norm_{\F_{p^4}/\F_{p^2}}(\betatil^{-1})^{-6(p+1)}
=\betatil^{12(p+1)},$$
and so
$$a(F_{\mfQ}^{24h})
\equiv\Tr_{\F_{p^2}/\F_p}(\varrho(F_{\mfQ}^{24h}))
=\Tr_{\F_{p^2}/\F_p}(\betatil^{12(p+1)})
=\betatil^{12(p+1)}+\betatil^{12(p^2+p)}$$
$$=2\betatil^{12(p+1)}
\equiv 2\beta^{12}\betab^{12}
=2q^{12h}\bmod{p}.$$
Here, note that
$\betatil^{p^2}=\betatil\in\kappa(\mfp)=\F_{p^2}$.
Suppose $a(F_{\mfQ}^{24h})\ne 2q^{12h}$.
Then $p$ divides the non-zero integer
$a(F_{\mfQ}^{24h})-2q^{12h}\in\cA_{3,\mfq}\subseteq\cA_{3,\cS}$,
and so
$p\in\cP(\cA_{3,\cS})$.
This contradicts the assumption $p\not\in\cP(\cA_{3,\cS})$.
Hence $a(F_{\mfQ}^{24h})=2q^{12h}$.
Then
$\pi^{24h}+\pib^{24h}=a(F_{\mfQ}^{24h})=2q^{12h}$
and
$\pi^{24h}\pib^{24h}=q^{24h}$.
Therefore
$\pi^{24h}=\pib^{24h}=q^{12h}$.

We prove 
$\pi^{24}=q^{12}$
(\cf \cite[Proof of Theorem 5.1]{Aeff}).
%
Let $\zeta:=\pib\pi^{-1}$.
Then $\zeta^{24h}=\pib^{24h}\pi^{-24h}=1$
and $\pib=\zeta\pi$.
Hence 
$$\Q(\pi)=\Q(\pib)=\Q(\zeta\pi)=\Q(\pi, \zeta)\supseteq\Q(\zeta).$$
Since $[\Q(\pi):\Q]=2$,
we have $\zeta^4=1$ or $\zeta^6=1$.
Then $\zeta^{12}=1$.
This implies $\pib^{12}=\zeta^{12}\pi^{12}=\pi^{12}$,
and so $\pi^{12}\in\Q$.
Here, note that $\Q(\pi)$ is an imaginary quadratic field.
Since $|\pi|=\sqrt{q}$, we have $|\pi^{12}|=q^6$.
Therefore $\pi^{12}=\pm q^6$ and $\pi^{24}=q^{12}$.

By (\ref{rhopip}), we have
$\varrho(F_{\mfQ})\equiv\pi\bmod{\mfp_0}$.
Then
$$\varrho^{12}(\mfq\cO_K)=\varrho^{12}(\mfQ^2)=\varrho^{24}(F_{\mfQ})
\equiv\pi^{24}=q^{12}\equiv\chi(F_{\mfQ})^{12}$$
$$=\chi(\mfQ)^{12}=\chi^6(\mfQ^2)=\chi^6(\mfq\cO_K)\bmod{p}.$$
Since $\cS$ generates $Cl_k$, the assertion follows.
\end{proof}

\noindent
\textit{Proof of Proposition \ref{c=12(p+1)}}.
\ \ 
Assume
$p\not\in\displaystyle \bigcap_{\cS\in\cT}\cP(\cA_{3,\cS})$.
Fix any prime number $q<\frac{p}{4}$ that is not inert in $k$.
Then we prove $\left(\frac{q}{p}\right)=-1$ as follows,
where $\left(\frac{q}{p}\right)$ is the Legendre symbol.

There is an element $\cS\in\cT$ such that $p\not\in\cP(\cA_{3,\cS})$.
Fix such $\cS$.
Let $\mfq$ be a prime of $k$ above $q$.
Since $p$ is inert in $k$ and $p>4q>q$, we have
$\mfp\not\in\cS\cup\Set{\mfq}$.
By replacing $K_0$ if necessary,
we may assume that $\mfp$ is inert in $K$ and that
any prime in $\cS\cup\Set{\mfq}$ is ramified in $K$.
Here, note that $c=12(p+1)$ does not change even if $\varphi^{12}$ is replaced with
$\varphi^{12p}$.
Let $\mfQ$ be the prime of $K$ above $\mfq$.
Since $q$ is not inert in $k$, we have
$\N_{\mfQ}=\N_{\mfq}=q$.
Recall
$\alpha=\varrho^2\chi^{-\frac{p+1}{2}}$
and
$\alpha^6=\varrho^{12}\chi^{-6}$.
By Lemma \ref{rho12chi-6}(2), we have
$$\alpha(F_{\mfQ})^{12}
=\alpha^{12}(\mfQ)
=\alpha^6(\mfq\cO_K)
=\varrho^{12}(\mfq\cO_K)\chi^{-6}(\mfq\cO_K)
=1.$$
Since $\varrho^{p+1}=\chi$, we have
$$\alpha^{\frac{p+1}{2}}=\varrho^{p+1}\chi^{-\frac{(p+1)^2}{4}}
=\chi^{1-\frac{(p+1)^2}{4}}
=\chi^{\frac{(1-p)(3+p)}{4}}.$$
Recall $p\equiv 1\bmod{4}$.
Then $\frac{3+p}{4}\in\Z$, and so
$\alpha^{\frac{p+1}{2}}=\one$.
We also see that
$\frac{p+1}{2}$ is odd.
Since the order of $\alpha(F_{\mfQ})$ divides $12$ and $\frac{p+1}{2}$, we have
$\alpha(F_{\mfQ})^3=1$.
Therefore
$$\text{$\alpha(F_{\mfQ})+\alpha(F_{\mfQ})^{-1}=-1$ or $2$.}$$
By (\ref{rhopip}), we have
$\pi^2+\pib^2\equiv\varrho(F_{\mfQ})^2+\varrho^p(F_{\mfQ})^2\bmod{\mfp_0}$.
Since $\varrho^p=\chi\varrho^{-1}$
and $\left(\frac{q}{p}\right)\equiv q^{\frac{p-1}{2}}\bmod{p}$,
we have
$$\pi^2+\pib^2
\equiv\varrho(F_{\mfQ})^2+\chi(F_{\mfQ})^2\varrho(F_{\mfQ})^{-2}
=\chi(F_{\mfQ})^{\frac{p+1}{2}}\alpha(F_{\mfQ})
+\chi(F_{\mfQ})^{2-\frac{p+1}{2}}\alpha(F_{\mfQ})^{-1}$$
$$=\chi(F_{\mfQ})^{\frac{p+1}{2}}(\alpha(F_{\mfQ})+\alpha(F_{\mfQ})^{-1})
=q^{\frac{p+1}{2}}(\alpha(F_{\mfQ})+\alpha(F_{\mfQ})^{-1})$$
$$=q\left(\frac{q}{p}\right)(\alpha(F_{\mfQ})+\alpha(F_{\mfQ})^{-1})\bmod{p}.$$
Assume
$\left(\frac{q}{p}\right)=1$.
Then
$\pi^2+\pib^2\equiv -q$ or $2q\bmod{p}$.
Since $\pi\pib=q$, we have
$(\pi+\pib)^2\equiv q$ or $4q\bmod{p}$.
We observe that
$0\leq (\pi+\pib)^2\leq 4q$.
Then
$(\pi+\pib)^2= q$ or $4q$
because
$|(\pi+\pib)^2-q|\leq 3q<p$
and
$|(\pi+\pib)^2-4q|\leq 4q<p$.
This contradicts $\pi+\pib\in\Z$.
Therefore $\left(\frac{q}{p}\right)=-1$.

Since $p\equiv 1\bmod{4}$, $p$ is the discriminant of $\Q(\sqrt{p})$.
Let $q$ be any prime number such that $2<q<\frac{p}{4}$.
Assume that $q$ splits in $k$.
Then we have proved $\left(\frac{q}{p}\right)=-1$.
Since $p\equiv 1\bmod{4}$ again, we have
$\left(\frac{p}{q}\right)=\left(\frac{q}{p}\right)(-1)^{\frac{(p-1)(q-1)}{4}}
=\left(\frac{q}{p}\right)=-1$.
Then $q$ is inert (and so does not split) in $\Q(\sqrt{p})$.
Therefore $p\in\cN^{prime}$.
%
\qed

\section{The case where $B\otimes_{\Q}k\not\cong\M_2(k)$ and $p$ splits in $k$}
\label{sec:split}

In this section, we assume 
\begin{itemize}
\item
$B\otimes_{\Q}k\not\cong\M_2(k)$, and
\item
$p$ splits in $k$.
\end{itemize}
Then $K=K_0$.
We have the following restriction on $p$.

\begin{lem}
\label{p=2or}
We have $p=2$ or $p\equiv 1\bmod{4}$.

\end{lem}

\begin{proof}

We can regard $k$ as a subfield of $\Q_p$ since $p$ splits in $k$.
Since $M^B(k)\ne\emptyset$, we have $M^B(\Q_p)\ne\emptyset$.
Then by \cite[Theorem 5.6]{JL}, we have either
\begin{enumerate}[\upshape (i)]
\item
$p=2$ and $d(B)=2q_1\cdots q_{2r-1}$ with the prime numbers $q_i$, $1\leq i\leq 2r-1$,
satisfying $q_i\equiv 3\bmod{4}$, or
\item
$d(B)=2p$ with $p\equiv 1\bmod{4}$.
\end{enumerate}
Then the assertion follows.
\end{proof}

Our goal of this section is:

\begin{prop}
\label{psplit}
We have
$\displaystyle p\in\Set{2}\cup\left(\bigcap_{\cS\in\cT}
\Bigl(\cP(\cA_{3,\cS})\cup\Set{l_{\mfq}|\mfq\in\cS}\Bigr)\right)
\cup\cN^{prime}$.

\end{prop}

Let $\mfp_1,\mfp_2$ be the primes of $k$ above $p$. Fix any $\cS\in\cT$.
First, we prove:

\begin{lem}
\label{rho12chi-6'}
Suppose that any prime in $\cS\cup\Set{\mfp_1,\mfp_2}$ is ramified in $K$.
Then:
\begin{enumerate}[\upshape (1)]
\item
The character
$\varrho^{12}\chi^{-6}:\G_K\longrightarrow\F_{p^2}^{\times}$
is unramified everywhere.
\item
If $p\not\in\cP(\cA_{3,\cS})\cup\Set{l_{\mfq}|\mfq\in\cS}$,
then the character
$Cl_K\longrightarrow\F_{p^2}^{\times}$
induced from $\varrho^{12}\chi^{-6}$ is trivial on
$\Psi_{K/k}(Cl_k)$.
\end{enumerate}
\end{lem}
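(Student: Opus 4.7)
The approach parallels the proof of Lemma \ref{rho12chi-6}, adapted to the split structure of $p$ in $k$. Under the ramification hypothesis there are exactly two primes $\mfP_1,\mfP_2$ of $K$ above $p$, namely the unique extensions of $\mfp_1,\mfp_2$, and each $K_{\mfP_i}/\Q_p$ is totally ramified quadratic with residue field $\F_p$. A preliminary observation: since $N_{K_{\mfP_i}/\Q_p}(u)\equiv\util^2\bmod p$ for $u\in\cO_{K,\mfP_i}^\times$, the cyclotomic character satisfies $\chi(u)=\util^{-2}$, whence $\chi^6(u)=\util^{-12}$ on each $\cO_{K,\mfP_i}^\times$.

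For part (1), $\varrho^{12}\chi^{-6}$ is unramified outside $p$, so it suffices to check unramifiedness at each $\mfP_i$. The plan is to adapt \cite[Remarks 4.10, 4.14]{J} to the split-prime situation, where the canonical isogeny character at $\mfp_i$ factors through the residue field $\F_p$. This should yield the split analog of Proposition \ref{cbaai}(1), giving $r(\mfP_i)^{12}(u)=\util^{-c_i}$ for $c_i$ in a short list of candidates. The identity $r(\mfP_i)(u)=s(\mfp_i)(u)$ on $\cO_{k,\mfp_i}^\times$, together with $p>7$ and $p\equiv 1\bmod 4$, will then pin down $c_i=12$. Consequently $r(\mfP_i)^{12}(u)=\util^{-12}=\chi^6(u)$, giving the desired unramifiedness.

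For part (2), fix $\mfq\in\cS$. Since $p\notin\Set{l_{\mfq}|\mfq\in\cS}$, $\mfq$ is coprime to $p$; by the ramification assumption there is a unique prime $\mfQ$ of $K$ above $\mfq$, and $\N_\mfQ=\N_\mfq=q$. Let $\beta=\bq$ and let $\widetilde{\beta}_i$ denote the reduction of $\beta$ at $\mfp_i$. The computation from the inert case carries over, with the local contributions at the two primes above $p$ multiplying:
\begin{equation*}
\varrho(F_\mfQ^{24h})=\varrho^{12}(\beta\cO_K)=r(\mfP_1)^{12}(\beta^{-1})\cdot r(\mfP_2)^{12}(\beta^{-1})=\widetilde{\beta}_1^{12}\widetilde{\beta}_2^{12}.
\end{equation*}
Using $\widetilde{\beta}_1\widetilde{\beta}_2\equiv\Norm_{k/\Q}(\beta)=q^h\bmod p$, this equals $q^{12h}\bmod p$; hence $a(F_\mfQ^{24h})\equiv 2q^{12h}\bmod p$ and $a(F_\mfQ^{24h})-2q^{12h}\in\cA_3$. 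The hypothesis $p\notin\cP(\cA_3)$ forces the difference to vanish. From here, the remainder of the argument (passing from $a(F_\mfQ^{24h})=2q^{12h}$ to $\pi^{24}=q^{12}$, then to $\varrho^{12}(\mfq\cO_K)\equiv\chi^6(\mfq\cO_K)\bmod p$, and invoking $\cS$-generation of $Cl_k$) proceeds verbatim as in Lemma \ref{rho12chi-6}(2).

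The main obstacle will be the split-prime local description $r(\mfP_i)^{12}(u)=\util^{-12}$. Unlike in the inert case, the local characters at both $\mfP_1$ and $\mfP_2$ must be coordinated, and the constraint $p\equiv 1\bmod 4$ (provided by $B\in\cBn$) plays a decisive role in ruling out the competing values of $c_i$. Once this local input is secured, both parts of the lemma follow by direct adaptation of the inert-case arguments.
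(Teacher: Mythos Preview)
Your plan for part~(2) is essentially the paper's argument: compute $\varrho(F_{\mfQ}^{24h})$ id\`ele-theoretically as the product of the two local contributions at $\mfP_1,\mfP_2$, identify it with $q^{12h}$ via $\Norm_{k/\Q}(\beta)=q^h$, and deduce $\pi^{24h}+\pib^{24h}=2q^{12h}$ from $p\notin\cP(\cA_3)\cup\Set{l_{\mfq}|\mfq\in\cS}$. (The paper reaches $\pi^{24h}+\pib^{24h}\equiv 2q^{12h}\bmod p$ by taking $\Norm_{\Q(\pi)/\Q}$ of $\pi^{24h}-q^{12h}$ rather than via the trace formula for $a(F_{\mfQ}^{24h})$, but this is a cosmetic difference.)

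For part~(1), however, you have made the argument harder than it is, and your proposed mechanism for pinning down $c_i$ does not work as stated. In the split case the paper simply invokes \cite[Proposition~4.8]{J}, which already gives $r(\mfP_j)^{12}(u)=\util^{-12}$ outright for $u\in\cO_{K,\mfP_j}^{\times}$ when $\mfP_j$ is ramified over $\Q_p$ with residue field $\F_p$; there is no list of candidates and no narrowing-down step. Your proposed route---adapt \cite[Remarks~4.10,~4.14]{J} (which concern the inert situation with residue field $\F_{p^2}$ or $\F_{p^4}$) and then use ``$r(\mfP_i)(u)=s(\mfp_i)(u)$ on $\cO_{k,\mfp_i}^{\times}$ together with $p\equiv 1\bmod 4$'' to isolate $c_i=12$---has a gap: unlike in Propositions~\ref{c=24}--\ref{c=12(p+1)}, here no value of $s(\mfp_i)$ is known in advance, so the comparison $r(\mfP_i)=s(\mfp_i)$ imposes no constraint at all. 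Moreover, the congruence $p\equiv 1\bmod 4$ plays no role in either part of this lemma; it enters only later, in the proof of Proposition~\ref{psplit}, to obtain $\alpha^{\frac{p+1}{2}}=1$.
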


\begin{proof}

(1)
The character $\varrho^{12}\chi^{-6}$ is unramified outside $p$.
For $j=1,2$,
let $\mfP_j$ be the prime of $K$ above $\mfp_j$.
We have
$r(\mfP_j)^{12}(u)=\util^{-12}$
(see \cite[Proposition 4.8]{J})
and
$\chi_{\mfP_j}(u)=\util^{-2}$
for any $u\in\cO_{K,\mfP_j}^{\times}$, $j=1,2$.
Then $\varrho^{12}\chi^{-6}$ is unramified at $\mfP_1,\mfP_2$.

(2)
Assume $p\not\in\cP(\cA_{3,\cS})\cup\Set{l_{\mfq}|\mfq\in\cS}$.
Fix any $\mfq\in\cS$, and let $q=l_{\mfq}$.
Then $p\ne q$.
Let $\mfQ$ be the prime of $K$ above $\mfq$.
Then
$\N_{\mfQ}=\N_{\mfq}=q$
and
$\mfq\cO_K=\mfQ^2$.
We have
\begin{equation*}
\displaystyle\varrho(F_{\mfQ})^{24h}=\varrho^{12}(F_{\mfQ}^{2h})
=\varrho^{12}(\mfQ^{2h})=\varrho^{12}(\mfq^h\cO_K)
=\varrho^{12}(\beta\cO_K)
\end{equation*}
\begin{equation*}
=\varrho^{12}((1)_{\infty},(1)_p,(\beta)^{\infty,p})
=\varrho^{12}((\beta^{-1})_{\infty},(\beta^{-1})_p,(1)^{\infty,p})
=\varrho^{12}((\beta^{-1})_p,(1)^{p})
\end{equation*}
\begin{equation*}
=\prod_{j=1}^2 r(\mfP_j)^{12}(\beta^{-1})
=\prod_{j=1}^2(\beta\bmod{\mfp_j})^{12}
\equiv\Norm_{k/\Q}(\beta)^{12}\bmod{p}.
\end{equation*}
Here, 
$((1)_{\infty},(1)_p,(\beta)^{\infty,p})$, $((\beta^{-1})_{\infty},(\beta^{-1})_p,(1)^{\infty,p})$,
$((\beta^{-1})_p,(1)^p)$)
are the id\`{e}les of $K$
which are defined in the same way as in the proof of Proposition \ref{c=24}.
%
%
Since 
$\Norm_{k/\Q}(\beta)^{12}=q^{12h}$,
we have
$$\varrho(F_{\mfQ})^{24h}\equiv q^{12h}\bmod{p}.$$
On the other hand,
$$\varrho(F_{\mfQ})\equiv\pi\bmod{\mfp_0}$$ by (\ref{rhopip}).
Then
$q^{12h}\equiv\pi^{24h}\bmod{\mfp_0}$,
and so
$p$ divides 
$\Norm_{\Q(\pi)/\Q}(\pi^{24h}-q^{12h})
=-q^{12h}((\pi^{24h}+\pib^{24h})-2q^{12h})$.
Hence $p$ divides
$(\pi^{24h}+\pib^{24h})-2q^{12h}\in\cA_{3,\mfq}\subseteq\cA_{3,\cS}$.
Since $p\not\in\cP(\cA_{3,\cS})$, we have
$\pi^{24h}+\pib^{24h}=2q^{12h}$.
Since $\pi^{24h}\pib^{24h}=q^{24h}$, we have
$\pi^{24h}=\pib^{24h}=q^{12h}$.
Then
$\pi^{24}=q^{12}$
by the same argument as in the proof of Lemma \ref{rho12chi-6}(2).
Therefore
$$\varrho^{12}(\mfq\cO_K)=\varrho^{12}(\mfQ^2)=\varrho^{24}(F_{\mfQ})
\equiv\pi^{24}=q^{12}
\equiv\chi(F_{\mfQ})^{12}$$
$$=\chi^6(\mfQ^2)=\chi^6(\mfq\cO_K)\bmod{p}.$$
\end{proof}

\noindent
\textit{Proof of Proposition \ref{psplit}}.
\ \ 
Assume 
$\displaystyle p\not\in\Set{2}\cup\bigcap_{\cS\in\cT}
\Bigl(\cP(\cA_{3,\cS})\cup\Set{l_{\mfq}|\mfq\in\cS}\Bigr)$.
Then there is an element $\cS\in\cT$ such that
$p\not\in\cP(\cA_{3,\cS})\cup\Set{l_{\mfq}|\mfq\in\cS}$.
Fix such $\cS$.
By Lemma \ref{p=2or}, we have $p\equiv 1\bmod{4}$.
Let $q<\frac{p}{4}$ be any prime number that is not inert in $k$.
Let $\mfq$ be a prime of $k$ above $q$.
By replacing $K_0$ if necessary, we may assume that
any prime in $\cS\cup\Set{\mfp_1,\mfp_2,\mfq}$ is ramified in $K$.
Let $\mfQ$ be the prime of $K$ above $\mfq$.
Then $\N_{\mfQ}=\N_{\mfq}=q$
and
$\mfq\cO_K=\mfQ^2$.
By Lemma \ref{rho12chi-6'}(2), we have
$$\alpha(F_{\mfQ})^{12}=\alpha^6(\mfQ^2)=\alpha^6(\mfq\cO_K)
=\varrho^{12}(\mfq\cO_K)\chi^{-6}(\mfq\cO_K)=1.$$
Since $p\equiv 1\bmod{4}$,
we have $\alpha^{\frac{p+1}{2}}=\chi^{\frac{(1-p)(3+p)}{4}}=\one$.
Then
$\alpha(F_{\mfQ})^3=1$
and
$$\text{$\alpha(F_{\mfQ})+\alpha(F_{\mfQ})^{-1}=-1$ or $2$.}$$
By the same argument as in the proof of Proposition \ref{c=12(p+1)}, we have
$$\pi^2+\pib^2\equiv 
q\left(\frac{q}{p}\right)(\alpha(F_{\mfQ})+\alpha(F_{\mfQ})^{-1})\bmod{p}$$
and
$\left(\frac{q}{p}\right)=-1$.
If $q>2$, then $\left(\frac{p}{q}\right)=-1$,
and so $q$ does not split in $\Q(\sqrt{p})$.
Therefore $p\in\cN^{prime}$.
\qed

By Propositions \ref{cbaai}--\ref{c=12(p+1)} and \ref{psplit}, we obtain (\ref{P_A(k)}).

\section{The case where $B\otimes_{\Q}k\cong\M_2(k)$ and $p$ is inert in $k$}
\label{sec:ksplitsB}

In this section, we assume 
\begin{itemize}
\item
$B\otimes_{\Q}k\cong\M_2(k)$, and
\item
$p$ is inert in $k$.
\end{itemize}
This case was done by Jordan in \cite{J}.
We repeat the argument and slightly modify the exceptional set of prime numbers.
In this case, we have $K=k$, $\mfP=\mfp=p\cO_k$ and $\mfp\not\in\cS_0$.
As already noted, $p$ never splits in $k$ when $B\otimes_{\Q}k\cong\M_2(k)$.

%

First, we classify the character $r(\mfp)$.

\begin{prop}
\label{cbaai'}
If $p>7$,
then there is an integer
$b\in\Set{12,12p,4(p+2),4(2p+1),6(p+1)}$
such that
$$r(\mfp)^{12}(u)=\util^{-b}$$
for any
$u\in\cO_{k,\mfp}^{\times}$.
Furthermore, we can take
\begin{equation*}
\begin{cases}
b\in\Set{12,12p,6(p+1)} &\text{if $p\not\equiv 1\bmod{3}$, and}\\
b\in\Set{12,12p,4(p+2),4(2p+1)} &\text{if $p\not\equiv 1\bmod{4}$.}
\end{cases}
\end{equation*}

\end{prop}

\begin{proof}

It follows from \cite[Corollary 4.13 and Remarks 4.10, 4.14]{J}.
\end{proof}



Suppose that we are in the situation of Proposition \ref{cbaai'}.
We take $b$ to satisfy
\begin{equation*}
\begin{cases}
b\in\Set{12,12p,6(p+1)} &\text{if $p\not\equiv 1\bmod{3}$, and}\\
b\in\Set{12,12p,4(p+2),4(2p+1)} &\text{if $p\not\equiv 1\bmod{4}$.}
\end{cases}
\end{equation*}
%
%
We study the cases [$b=12$ or $12p$], [$b=4(p+2)$ or $4(2p+1)$]
and [$b=6(p+1)$] separately.

[Case $b=12$ or $12p$].

\begin{prop}
\label{b=12}
We have
$\displaystyle p\in\bigcap_{\mfq\in\cS_0}\cP(\cA'_{1,\mfq})$.

\end{prop}

\begin{proof}

Fix any $\mfq\in\cS_0$.
Then $\mfp\ne \mfq$.
By replacing $\varrho$ with $\varrho^p$ if necessary, we may assume $b=12$.
Then
$$r(\mfp)^{12}(u)=\util^{-12}$$
for any
$u\in\cO_{k,\mfp}^{\times}$.

Let $q=l_{\mfq}$.
Then $\N_{\mfq}=q$.
Note that $p\ne q$.
We claim
\begin{enumerate}[\upshape (i)]
\item
$a(F_{\mfq}^{12h_{\mfq}})\equiv\Tr_{k/\Q}(\bq^{12})\bmod{p}$,
and
\item
$a(F_{\mfq}^{12h_{\mfq}})\ne\Tr_{k/\Q}(\bq^{12})$,
\end{enumerate}
which are proved as follows.


(i)
We have
\begin{equation*}
\varrho(F_{\mfq}^{12h})=\varrho^{12}(\mfq^h)
=\varrho^{12}(\beta\cO_k)
\end{equation*}
\begin{equation}
\label{rhoF12h}
=\varrho^{12}((1)_{\infty},(1)_p,(\beta)^{\infty,p})
=\varrho^{12}((\beta^{-1})_{\infty},(\beta^{-1})_p,(1)^{\infty,p})
\end{equation}
\begin{equation*}
=\varrho^{12}((\beta^{-1})_p,(1)^p)
=r(\mfp)^{12}(\beta^{-1}).
\end{equation*}
%
Here, 
$((1)_{\infty},(1)_p,(\beta)^{\infty,p})$,
$((\beta^{-1})_{\infty},(\beta^{-1})_p,(1)^{\infty,p})$,
$((\beta^{-1})_p,(1)^p)$
are the id\`{e}les of $k$ which are defined in the same way as in the proof of Proposition
\ref{c=24}.
%
%
Then
$\varrho(F_{\mfq}^{12h})=(\betatil^{-1})^{-12}=\betatil^{12}$.
%
By \cite[Proposition 5.3]{J}, we obtain
$$a(F_{\mfq}^{12h})
\equiv\Tr_{\F_{p^2}/\F_p}(\varrho(F_{\mfq}^{12h}))
=\Tr_{\F_{p^2}/\F_p}(\betatil^{12})
\equiv\Tr_{k/\Q}(\beta^{12})\bmod{p}.$$

(ii)
Suppose $a(F_{\mfq}^{12h})=\Tr_{k/\Q}(\beta^{12})$.
Since $\beta\betab=\N_{\mfq}^h=q^h$, the roots of
\begin{equation}
\label{eq1'}
X^2-\Tr_{k/\Q}(\beta^{12})X+q^{12h}=0
\end{equation}
are $\beta^{12},\betab^{12}$.
On the other hand,
the roots of
$X^2-a(F_{\mfq})X+q=0$
are $\pi,\pib$.
We have $\pi\pib=q$ and
$a(F_{\mfq}^{12h})=\pi^{12h}+\pib^{12h}$.
Then $\pi^{12h},\pib^{12h}$ are the roots of (\ref{eq1'})
since $a(F_{\mfq}^{12h})=\Tr_{k/\Q}(\beta^{12})$.
Hence 
$\Set{\beta^{12},\betab^{12}}=\Set{\pi^{12h},\pib^{12h}}$,
and so
$$\Q(\pi)\supseteq\Q(\pi^{12h})=\Q(\beta^{12})\subseteq\Q(\beta)\subseteq k.$$
We prove $\beta^{12}\not\in\Q$.
Assume otherwise, i.e., $\beta^{12}\in\Q$.
Then
$q^{12h}
=\beta^{12}\betab^{12}=(\beta^{12})^2$, and so
$\beta^{12}=\pm q^{6h}$.
Hence
$\mfq^{12h}=\beta^{12}\cO_k=q^{6h}\cO_k$ and
$\mfq^2=q\cO_k$.
This implies that $\mfq$ is ramified in $k/\Q$,
which contradicts $\mfq\in\cS_0$.
Therefore $\beta^{12}\not\in\Q$.
Since $[\Q(\pi):\Q]=[k:\Q]=2$,
we have
$$\Q(\pi)=\Q(\pi^{12h})=\Q(\beta^{12})=\Q(\beta)=k.$$
Then $\pi\in k$.
Since $\pi\pib=q$, we have
$\mfq=\pi\cO_k$ or $\pib\cO_k$.
This implies that $\mfq$ is principal, which contradicts $\mfq\in\cS_0$.
Then (ii) has been proved.

By (i) and (ii), $p$ divides the non-zero integer
$a(F_{\mfq}^{12h})-\Tr_{k/\Q}(\beta^{12})$.
Since $a(F_{\mfq}^{12h})=\pi^{12h}+\pib^{12h}\in\cC'_{\mfq}$, we have
$a(F_{\mfq}^{12h})-\Tr_{k/\Q}(\beta^{12})\in\cA'_{1,\mfq}$.
Then $p\in\cP(\cA'_{1,\mfq})$.
Therefore $\displaystyle p\in\bigcap_{\mfq\in\cS_0}\cP(\cA'_{1,\mfq})$.
\end{proof}

[Case $b=4(p+2)$ or $4(2p+1)$].

\begin{prop}
\label{c=8(p+2)}
We have
$\displaystyle p\in\bigcap_{\mfq\in\cS_0}\cP(\cA'_{2,\mfq})$.

\end{prop}

\begin{proof}

Fix any $\mfq\in\cS_0$.
By replacing $\varrho$ with $\varrho^p$ if necessary, we may assume $b=4(p+2)$.
Then
$$r(\mfp)^{12}(u)=\util^{-4(p+2)}$$
for any
$u\in\cO_{k,\mfp}^{\times}$.

Let $q=l_{\mfq}$.
We claim
\begin{enumerate}[\upshape (i)]
\item
$a(F_{\mfq}^{12h_{\mfq}})\equiv q^{4h_{\mfq}}\Tr_{k/\Q}(\bq^4)\bmod{p}$,
and
\item
$a(F_{\mfq}^{12h_{\mfq}})\ne q^{4h_{\mfq}}\Tr_{k/\Q}(\bq^4)$.
\end{enumerate}


(i)
By (\ref{rhoF12h}), we have 
$$\varrho(F_{\mfq}^{12h})=r(\mfp)^{12}(\beta^{-1})
=(\betatil^{-1})^{-4(p+2)}
=\betatil^{4(p+2)}.$$
Since $\betab^p\equiv\beta\bmod{\mfp}$,
$\beta^p\equiv\betab\bmod{\mfp}$
and
$\beta\betab=q^h$,
we have
$$a(F_{\mfq}^{12h})
\equiv\Tr_{\F_{p^2}/\F_p}(\varrho(F_{\mfq}^{12h}))
=\Tr_{\F_{p^2}/\F_p}(\betatil^{4(p+2)})
\equiv\Tr_{k/\Q}(\beta^{4(p+2)})$$
$$=\beta^{4p+8}+\betab^{4p+8}
\equiv\betab^4\beta^{8}+\beta^4\betab^{8}
=\beta^4\betab^4(\beta^4+\betab^4)
=q^{4h}\Tr_{k/\Q}(\beta^4)\bmod{p}.$$

(ii)
Suppose $a(F_{\mfq}^{12h})=q^{4h}\Tr_{k/\Q}(\beta^4)$.
Since $\beta^4\betab^4=q^{4h}$, the roots of
\begin{equation}
\label{eq2'}
X^2-\Tr_{k/\Q}(\beta^4)X+q^{4h}=0
\end{equation}
are $\beta^4,\betab^4$.
On the other hand,
the roots of
$X^2-a(F_{\mfq})X+q=0$
are $\pi,\pib$.
Then
$\pi^{12h}+\pib^{12h}=a(F_{\mfq}^{12h})=q^{4h}\Tr_{k/\Q}(\beta^4)$
and
$\pi\pib=q$.
Hence
$\frac{\pi^{12h}}{q^{4h}}+\frac{\pib^{12h}}{q^{4h}}=\Tr_{k/\Q}(\beta^4)$
and
$\frac{\pi^{12h}}{q^{4h}}\cdot\frac{\pib^{12h}}{q^{4h}}=q^{4h}$.
Then
$\frac{\pi^{12h}}{q^{4h}},\frac{\pib^{12h}}{q^{4h}}$
are the roots of (\ref{eq2'}).
Hence
$\Set{\beta^4,\betab^4}=\Set{\frac{\pi^{12h}}{q^{4h}},\frac{\pib^{12h}}{q^{4h}}}$,
and so
$$\Q(\pi)\supseteq\Q(\pi^{12h})=\Q(\beta^4)\subseteq\Q(\beta)\subseteq k.$$
%
Assume $\beta^4\in\Q$.
Then $q^{4h}=\beta^4\betab^4=(\beta^4)^2$, and so $\beta^4=\pm q^{2h}$.
Hence $\mfq^{4h}=\beta^4\cO_k=q^{2h}\cO_k$
and
$\mfq^2=q\cO_k$.
This contradicts $\mfq\in\cS_0$.
Therefore $\beta^4\not\in\Q$.
Then
$$\Q(\pi)=\Q(\pi^{12h})=\Q(\beta^4)=\Q(\beta)=k,$$
and so $\pi\in k$.
Since $\pi\pib=q$, we have
$\mfq=\pi\cO_k$ or $\pib\cO_k$.
This contradicts $\mfq\in\cS_0$.
Then (ii) has been proved.

Since $a(F_{\mfQ}^{12h})-q^{4h}\Tr_{k/\Q}(\beta^4)\in\cA'_{2,\mfq}$,
we conclude $p\in\cP(\cA'_{2,\mfq})$.
\end{proof}

[Case $b=6(p+1)$].
In this case, we have $p\equiv 1\bmod{4}$.

\begin{prop}
\label{b=6(p+1)}
We have
$\displaystyle p\in\left(\bigcap_{\cS\in\cT}
\cP(\cA'_{3,\cS})\right)
\cup\cN^{prime}$.
\end{prop}

Fix any $\cS\in\cT$.
First, we prove:

\begin{lem}
\label{rho24chi-12}
\begin{enumerate}[\upshape (1)]
\item
The character
$\varrho^{12}\chi^{-6}:\G_k\longrightarrow\F_{p^2}^{\times}$
is unramified everywhere.
\item
If $p\not\in\cP(\cA'_{3,\cS})$, then the character
$\varrho^{24}\chi^{-12}:\G_k\longrightarrow\F_{p^2}^{\times}$ is trivial.
\end{enumerate}
\end{lem}

\begin{proof}

(1)
It suffices to prove that $\varrho^{12}\chi^{-6}$
is unramified at $\mfp$ ($=p\cO_k$).
We have
$$r(\mfp)^{12}(u)=\util^{-6(p+1)}
=(\util^{p+1})^{-6}
=\Norm_{\F_{p^2}/\F_p}(\util)^{-6}$$
and
$\chi_{\mfp}(u)=\Norm_{\F_{p^2}/\F_p}(\util)^{-1}$
for any
$u\in\cO_{k,\mfp}^{\times}$
(\cf \cite[Proof of Proposition 4.8]{J}).
Hence
$r(\mfp)^{12}(u)\chi_{\mfp}^{-6}(u)=1$
for any $u\in\cO_{k,\mfp}^{\times}$.
Therefore $\rho^{12}\chi^{-6}$ is unramified at $\mfp$.

(2)
Assume $p\not\in\cP(\cA'_{3,\cS})$.
Fix any $\mfq\in\cS$, and let $q=l_{\mfq}$.
Then $\N_{\mfq}=q$ and $p\ne q$.
By (\ref{rhoF12h}), we have
$$\varrho(F_{\mfq}^{12h})=r(\mfp)^{12}(\beta^{-1})
=(\betatil^{-1})^{-6(p+1)}
=\betatil^{6(p+1)},$$
and so
$$a(F_{\mfq}^{12h})
\equiv\Tr_{\F_{p^2}/\F_p}(\varrho(F_{\mfq}^{12h}))
=\Tr_{\F_{p^2}/\F_p}(\betatil^{6(p+1)})
=\betatil^{6(p+1)}+\betatil^{6(p^2+p)}$$
$$=2\betatil^{6(p+1)}
\equiv 2\beta^{6}\betab^{6}
=2q^{6h}\bmod{p}.$$
%
Suppose $a(F_{\mfq}^{12h})\ne 2q^{6h}$.
Then $p$ divides the non-zero integer
$a(F_{\mfq}^{12h})-2q^{6h}\in\cA'_{3,\mfq}\subseteq\cA'_{3,\cS}$,
and so
$p\in\cP(\cA'_{3,\cS})$.
This contradicts the assumption $p\not\in\cP(\cA'_{3,\cS})$.
Hence $a(F_{\mfq}^{12h})=2q^{6h}$.
Then
$\pi^{12h}+\pib^{12h}=a(F_{\mfq}^{12h})=2q^{6h}$
and
$\pi^{12h}\pib^{12h}=q^{12h}$.
Therefore
$\pi^{12h}=\pib^{12h}=q^{6h}$.
Then 
$\pi^{24h}=q^{12h}$, 
and so
$\pi^{24}=q^{12}$
as seen in the proof of Lemma \ref{rho12chi-6}(2).
Note that the case
$\pi^{12}=-q^{6}$ ($\ne q^6$)
can occur (e.g., $q=2$ and $\pi=1+\sqrt{-1}$).

%

By (\ref{rhopip}), we have
$\varrho(F_{\mfq})\equiv\pi\bmod{\mfp_0}$.
Then
$$\varrho^{24}(\mfq)=\varrho^{24}(F_{\mfq})
\equiv\pi^{24}=q^{12}\equiv\chi(F_{\mfq})^{12}
=\chi(\mfq)^{12}\bmod{p}.$$
By (1), $\varrho^{24}\chi^{-12}$ induces a character
$Cl_k\longrightarrow\F_{p^2}^{\times}$.
This character is trivial since $\cS$ generates $Cl_k$.
Then the assertion follows.
\end{proof}

\noindent
\textit{Proof of Proposition \ref{b=6(p+1)}}.
\ \ 
Assume
$p\not\in\displaystyle \bigcap_{\cS\in\cT}\cP(\cA'_{3,\cS})$.
For any prime number $q<\frac{p}{4}$ that is not inert in $k$,
we prove $\left(\frac{q}{p}\right)=-1$ as follows.

There is an element $\cS\in\cT$ such that $p\not\in\cP(\cA'_{3,\cS})$.
Fix such $\cS$.
Let $\mfq$ be a prime of $k$ above $q$.
Note that $p>4q>q$.
Since $q$ is not inert in $k$, we have
$\N_{\mfq}=q$.
%
By Lemma \ref{rho24chi-12}(2), we have
$$\alpha(F_{\mfq})^{12}
=\alpha^{12}(\mfq)
=\varrho^{24}(\mfq)\chi^{-12}(\mfq)
=1.$$
%
Since $p\equiv 1\bmod{4}$, we have
$\alpha^{\frac{p+1}{2}}=\chi^{\frac{(1-p)(3+p)}{4}}=\one$.
Then
$\alpha(F_{\mfq})^3=1$,
and so
$$\text{$\alpha(F_{\mfq})+\alpha(F_{\mfq})^{-1}=-1$ or $2$.}$$
%
%
By the same argument as in the proof of Proposition \ref{c=12(p+1)},
we have 
$$\pi^2+\pib^2\equiv
q\left(\frac{q}{p}\right)(\alpha(F_{\mfq})+\alpha(F_{\mfq})^{-1})\bmod{p}$$
and 
$\left(\frac{q}{p}\right)=-1$.
%

Then we conclude $p\in\cN^{prime}$.
%
\qed

By Propositions \ref{cbaai'}--\ref{b=6(p+1)}, we obtain (\ref{P_A'(k)}).
Then Theorem \ref{mainthm'} has been proved.

\subsection*{Acknowledgements}

This work was supported by JSPS KAKENHI Grant Numbers 
JP25800025, JP16K17578, JP21K03187
and
Research Institute for Science and Technology of Tokyo Denki 
University Grant Numbers Q16K-06, Q20K-01 / Japan.

\def\bibname{References}

\end{document}